\DeclareFontFamily{OT1}{pzc}{}
\DeclareFontShape{OT1}{pzc}{m}{it}{<-> s * [1.10] pzcmi7t}{}
\DeclareMathAlphabet{\mathpzc}{OT1}{pzc}{m}{it}
\numberwithin{equation}{section}
\theoremstyle{plain}
  \newtheorem{theorem}{Theorem}[section]
  \newtheorem{lemma}[theorem]{Lemma}
\theoremstyle{definition}
\newtheorem{example}[theorem]{Example}
\theoremstyle{remark}
\newtheorem{remark}[theorem]{Remark}
\numberwithin{equation}{section}
\numberwithin{equation}{section}
\renewcommand{\1}{\mathds{1}}
\newcommand{\mfo}{\mathfrak{o}}
\renewcommand{\l}{\mathfrak{l}}
\renewcommand{\r}{\mathfrak{h}}
\newcommand{\Hqy}{\mathbf{H}^{[y}}
\newcommand{\Hqb}{\mathbf{H}^{[b}}
\newcommand{\con}{\mathfrak{C}}
\newcommand{\Hnormy}{\mathscr{H}^{[y}}
\newcommand{\Hnormb}{\mathscr{H}^{[b}}
\newcommand{\Hnormz}{\mathscr{H}^{[z}}
\newcommand{\scale}{W}
\title{Fluctuation theory of continuous-time skip-free downward Markov chains with applications to branching processes with immigration}
\author{ R. Loeffen}
\address{University of Manchester, School of Mathematics,  Manchester, M13 9PL, UK.}
\email{ronnie.loeffen@manchester.ac.uk}
\author{P. Patie}
\address{School of Operations Research and Information Engineering, Cornell University, Ithaca, NY 14853.}
\email{	pp396@orie.cornell.edu}
\author{J. Wang}
\address{School of Operations Research and Information Engineering, Cornell University, Ithaca, NY 14853.}
\email{	jw2333@cornell.edu}
\begin{document}

\begin{abstract}
We develop a new methodology for the fluctuation theory of  continuous-time skip-free Markov chains, extending the recent work of Choi and Patie \cite{Choi-Patie} for discrete-time skip-free Markov chains.
As the main application we use it to derive a full set of fluctuation identities regarding exiting a finite or infinite interval for Markov branching processes with immigration, thereby uncovering many new results for this classical family of continuous-time Markov chains. The theory also allows us to recover in a simple manner fluctuation identities for skip-free downward compound Poisson processes.
\end{abstract}
\keywords{Keywords: Skip-free Markov chains, First passage time problems, fluctuation theory, Branching processes with immigration, Potential theory} \maketitle

\section {Introduction}

First passage times are ubiquitous in probability theory and, more broadly, in many fields of sciences, such as epidemiology, physics, neurology, insurance and financial mathematics. In spite of the wide applicability of first-passage phenomena, finer characterizations of the law of these random variables
beyond as solutions of a boundary value problem associated with a linear operator, have been obtained
only for very few and specific classes of Markov processes, including most notably one-dimensional diffusion processes and their discrete-state space analogue birth-death chains on the one hand and processes with stationary and independent increments on the other hand, both for which tailor-made approaches have been developed.
Unfortunately, attempts to carry over techniques from these cases have had limited success, providing some
evidence towards the need to find a novel way to tackle this issue. In this vein, in the recent paper \cite{Choi-Patie}, Choi and Patie develop an original  and  comprehensive approach based on a combination of techniques such as  the theory of Martin boundary and potential theory to provide an expression for
the resolvent of a discrete-time skip-free Markov chain in terms of the so-called fundamental excessive functions of three different chains, namely the given skip-free downward (upward) Markov chain, its dual and the original one killed when going above (below) a given level. This allowed them to derive various fluctuation identities involving the first hitting time of a given point and the first exit times out of a given finite or semi-infinite interval.

In this paper we will follow the approach of Choi and Patie \cite{Choi-Patie} to extend their results to the case of continuous-time skip-free Markov chains, while also improving on their methodology as explained in the beginning of Section \ref{sec_pot_fluc}. This is our first main contribution. Besides our time homogeneous chain being skip-free (downward) we further assume that all its states are stable (i.e.~no instantaneous states) and transient, though we allow for state-dependent killing and for explosion with the precise setting being laid out in Section \ref{sec_prelim}. Though we already mention that for our applications covered in Sections \ref{sec_CPP} and \ref{sec_MBI} we can also deal with the cases where not all states are transient by employing the usual trick of including a constant killing rate, say $q>0$, which makes the chain transient, and then letting $q\downarrow 0$.
Our second main contribution is to use the general theory presented in Section \ref{sec_pot_fluc} to derive a collection of fluctuation identities for Markov branching processes with immigration (MBIs), a well-studied class of continuous-time Markov chains. Most of these identities are new as explained at the end of Section \ref{sec_MBI}, which shows the power of our methodology. As a quick illustration of the general theory and in order to contrast it with the case of MBIs we will also derive the same type of fluctuation identities for continuous-time skip-free Markov chains with stationary and independent increments, i.e.~skip-free compound Poisson processes (CPPs), though for this class the resulting identities are (essentially) known.

Of course for deriving certain probabilities involving first passage times there is the analytical approach. To illustrate this, let $p(x)$ be the probability that our skip-free downward continuous-time Markov chain  with state space $E\subset \mathbb Z$ and starting at $x\in E$ exits a given interval $(a,b)\subset E$ by hitting $a$. Then, with some of the terminology explained later on in Section \ref{sec_prelim}, $p(x), x\in[a-1,b+1]$, is harmonic for the Markov chain killed at exiting $(a,b)$ and as a consequence, with $Q$  the Q-matrix of the chain, $p(x)$, $x\geq a$, solves $\sum_{y\in E}Q(x,y)p(y)$, $x\in[a+1,b-1]$, with the boundary conditions $p(a)=1$ and $p(x)=0$ for $x\geq b$. Consequently, $p(x)$ can be computed by solving a linear system of $b-a-1$ equations. Note that this approach becomes more difficult when $E$ is unbounded and $a=-\infty$ or $b=\infty$ as one then has to deal with an infinite system of equations. Our main theorems in Section \ref{sec_pot_fluc} provide in particular a structure for the solution of these systems. Besides yielding more insight into the solution, even in the case of a finite system, our results for MBIs and CPPs allow one to compute probabilities like $p(x)$ for these classes in a more efficient way than by numerically solving a system of linear equations, especially when one wants to compute $p(x)$ for various values of $a$ and $b$.

The rest of the paper is organized as follows. In Section 2 we introduce various notations and concepts that will be needed later on and in particular define the class of Markov chains that we will be working with. Section \ref{sec_pot_fluc} contains the main results where we, following Choi and Patie \cite{Choi-Patie}, express, for a general Markov chain, the resolvent and derive various fluctuation identities in terms of the three fundamental excessive functions, whereas in Sections \ref{sec_CPP} and \ref{sec_MBI} we apply these results to the class of skip-free CPPs and MBIs respectively.

\section{Preliminaries}
\label{sec_prelim}

Let $E=[\l,\r]\cap\mathbb Z$ where $\l,\r\in\mathbb Z\cup\{-\infty,\infty\}$ with $\l<\r$. We set $E_\partial=E\cup\{\partial\}$ where $\partial$ denotes the cemetery state. We always work in a canonical setup where (i) $\Omega$ is the space of functions $\omega:[0,\infty)\to E_\partial$ with the property that there exists $\zeta(\omega)\in[0,\infty]$ such that $\omega$ is $E$-valued and right-continuous on $[0,\zeta(\omega))$ and $\omega(t)=\partial$ for $t\geq\zeta(\omega)$, (ii) $X=(X_t)_{t\geq 0}$ is the process defined by $X_t(\omega)=\omega(t)$ for $\omega\in\Omega$, (iii) $(\mathcal{F}_t)_{t\geq 0}$ is the natural filtration of $X$ and (iv) $\mathcal F$ is the smallest sigma-algebra containing $\cup_{t\geq 0}\mathcal F_t$. The random variable $\zeta$ then represents the lifetime $\inf\{t\geq 0:X_t=\partial\}$ of $X$. With $J_0=0$ and $J_n=\inf\{t\geq J_{n-1}:X_t\neq X_{J_{n-1}}\}$ for $n\geq 1$ , the random variables $J_1,J_2,\ldots$ are the consecutive jump times of $X$. We denote by $J_\infty=\lim_{n\to\infty}J_n$ the explosion time.

Let $Q=(Q(x,y))_{x,y\in E}$ be a given $Q$-matrix on $E$, i.e.~a matrix satisfying $0\leq Q(x,y)<\infty$ for $x\neq y$ and $\sum_{y\neq x}Q(x,y)\leq Q(x)<\infty$ where $Q(x):=-Q(x,x)$. We set $Q(x,\partial)=Q(x)-\sum_{y\neq x}Q(x,y)\geq 0$ for $x\in E$. Note that $Q$ is not assumed to be conservative, i.e. $Q(x,\partial)$ is not necessarily equal to $0$.
Given $Q$ we next specify a continuous-time, time homogeneous Markov chain on $(\Omega,\mathcal F)$ via the usual jump chain/holding time definition.
Namely, we assume $\mathbb P=(\mathbb P_x)_{x\in E_\partial}$ is a family of probability measures on $(\Omega,\mathcal F)$ such that (i) $\mathbb P_x(X_0=x)=1$ for $x\in E_\partial$, (ii)  for all $n\geq 1$ and $x\in E$, given $J_{n-1}<\infty$ and $X_{J_{n-1}}=x$ the $n$-th holding time $H_n:=J_n-J_{n-1}$ is exponentially distributed with parameter $Q(x)$ if $Q(x)>0$ and equal to $\infty$ otherwise and is further conditionally independent from $\mathcal F_{J_{n-1}}$ (iii) for all $n\geq 1$, $x\in E$ and $y\in E_\partial$,  given $J_{n-1}<\infty$ and $X_{J_{n-1}}=x$, $X_{J_n}=y$ with probability $\frac{Q(x,y)}{Q(x)}$ if $Q(x)>0$ and $X_{J_n}$ is conditionally independent from $\mathcal F_{J_{n-1}}$ and $H_n$, (iv) $X$ is sent to the cemetery state if explosion occurs, i.e.~$X_{J_\infty}=\partial$ if $J_\infty<\infty$. We call $(X,\mathbb P)$ the Markov chain generated by $Q$ with state space $E$. Note that the Markov chain can end up in the cemetery state either due to explosion or due to a transition to $\partial$ by the jump chain $(X_{J_n})_{n\geq 0}$.

We write $P_t(x,y)=\mathbb P_x(X_t=y)$, $x,y\in E$, $t\geq 0$, for the transition kernel of $(X,\mathbb P)$.
From the theory of continuous time Markov chains (see e.g. Section 2.8 in \cite{norris}\footnote{In this reference it is assumed that $Q$ is conservative but the arguments go through if $Q$ is not conservative.}), the transition kernel $P_t(x,y)$, $x,y\in E$, $t\geq 0$, is the minimal nonnegative solution of both the forward equations, $\frac d{d t}P_t(x,y)=\sum_{z\in E}P_t(x,z)Q(z,y)$, $P_0(x,y)=\delta_{xy}$ with $\delta_{xy}$ the Kronecker delta, as well as the backward equations, $\frac d{d t}P_t(x,y)=\sum_{z\in E}Q(x,z)P_t(z,y)$, $P_0(x,y)=\delta_{xy}$.
We will use the notation $P_tf(x):=\sum_{y\in E} P_t(x,y)f(y)=\mathbb E_x[f(X_t)\1_{\{t<\zeta\}}]$ and $Qf(x):=\sum_{y\in E} Q(x,y)f(y)$ for a function $f:E\to\mathbb R$ and where $\mathbb E_x$ is the expectation operator associated with $\mathbb P_x$.

Throughout the paper we assume the Q-matrix $Q$ is such that for $x,y\in E$,
\begin{equation}\label{skipfree_downward_assump}
Q(x,x-1)>0 \quad \text{if $x>\l$} \quad \text{and} \quad Q(x,y)=0 \quad \text{if $y\leq x-2$}.
\end{equation}
This assumption means that $(X,\mathbb P)$ is skip-free downwards: from any state $(X,\mathbb P)$ can reach a lower state with positive probability but only by visiting all states in between. We further assume, for the rest of this section, that all states in $E$ are transient.
The resolvent of $(X,\mathbb P)$ is defined by $G(x,y)=\int_{0}^\infty P_t(x,y) d t$. By the transience assumption  $G(x,y)<\infty$ for all $x,y\in E$ (see e.g. Theorem 3.4.2 in \cite{norris}) and by the skip-free downward assumption in combination with the right-continuity of the sample paths $G(x,y)>0$ for all $x,y\in E$ such that $y\leq x$.

We say that a function $h:E\to[0,\infty)$ is excessive, respectively harmonic, for $(X,\mathbb P)$, if $P_t h(x)\leq h(x)$, respectively $P_t h(x)=h(x)$, for all $x\in E$ and $t\geq 0$. Note that by the semi-group property of $P_t(x,y)$,  $P_t h(x)$ is nonincreasing in $t$ when $h$ is excessive and so $\lim_{t\downarrow 0}P_t h(x)=h(x)$ by the monotone convergence theorem and the right-continuity of $P_t$ at $t=0$. Similarly, we call a measure $\pi:E\to[0,\infty)$  excessive, respectively stationary,  if $\sum_{x\in E}\pi(x)P_t(x,y)\leq \pi(y)$, respectively $\sum_{x\in E}\pi(x)P_t(x,y)= \pi(y)$, for all $y\in E$ and $t\geq 0$.
Under our assumptions there exists an excessive measure $\pi$ for $(X,\mathbb P)$ that is positive (i.e. $\pi(y)>0$ for all $y\in E$), e.g. $\pi(y) = \sum_{x\in E} 2^{-\sigma(x)} G(x,y)$ for some injective map $\sigma:E\to\{1,2,\ldots\}$ will do, see also Section 5.2 in \cite{KSK}. We now fix such a positive, excessive measure $\pi$ as our reference measure and we denote by $\mathtt g(x,y)=G(x,y)/\pi(y)$ the resolvent density.

Next we introduce some auxiliary Markov chains associated with $(X,\mathbb P)$.
First, writing $\mathbb{\widehat P}=(\widehat{\mathbb P}_y)_{y\in E_\partial}$, the {dual} $(X,\mathbb{\widehat P})$ of $(X,\mathbb P)$ with respect to $\pi$ is the Markov chain with state space $E$ generated by the Q-matrix $\widehat Q$ given by
\begin{equation*}
\widehat Q(y,x) = \frac{\pi(x)Q(x,y)}{\pi(y)}, \quad  y,x\in E.
\end{equation*}
Note that $\widehat Q$ is a well-defined Q-matrix since $\pi$ is a positive measure and, for each $y\in E$,
\begin{equation*}
\begin{split}
\pi(y)\sum_{x\in E}\widehat Q(y,x) = \sum_{x\in E} \pi(x)Q(x,y) \leq & \liminf_{t\downarrow 0}\sum_{x\neq y}\pi(x)\frac{P_t(x,y)}t + \pi(y)\lim_{t\downarrow 0} \frac{P_t(y,y)-1}t \\
= & \liminf_{t\downarrow 0}\frac{\sum_{x\in E} \pi(x)P_t(x,y) - \pi(y)}t \\
\leq & 0,
\end{split}
\end{equation*}
where the first inequality is due to Fatou's lemma and, from the backward equations, $\frac{d}{d t}P_t(x,y)|_{t=0}=Q(x,y)$ and the last inequality is because $\pi$ is an excessive measure for $(X,\mathbb P)$. Denoting by $P_t(y,x):=\widehat{\mathbb P}_y(X_t=x)$ the transition kernel of $(X,\mathbb{\widehat P})$, we easily see
that $\frac{\pi(x)P_t(x,y)}{\pi(y)}$, $y,x\in E$, $t\geq 0$, is the minimal nonnegative solution of the backward (or forward) equations associated with $\widehat Q$ and therefore  $\widehat P_t(y,x)=\frac{\pi(x)P_t(x,y)}{\pi(y)}$.
Denoting by $\widehat G$ and $\widehat{\mathtt g}$ respectively the resolvent and the resolvent density (with respect to $\pi$) of the dual, we immediately observe that $\widehat{\mathtt g}(y,x)=\mathtt g(x,y)$, $x,y\in E$.
Now denote by $T_A$  the first hitting time of the set $A \subseteq  \mathbb R$, that is
\[T_A = \inf\{t \geq 0; X_t \in A\},\]
with the usual convention that $\inf \{\emptyset\} = \infty$. We will write $T_a:=T_{\{a\}}$ for $a\in E$. For $A\subseteq \mathbb R$ and $B\subseteq E_\partial$ we write $B^A:=B\backslash A$. We will also use the notation $[b$ for the set $[b,\infty)$. %
For $A\subseteq \mathbb R$ and writing $\mathbb P^A=\left(\mathbb P^A_x\right)_{x\in E_\partial^A}$,
the second auxiliary Markov chain  $(X,\mathbb P^A)$  is the one with state space $E^A$ and generated by the Q-matrix  $Q^A$ given by
\begin{equation*}
Q^A(x,y) = Q(x,y), \quad x,y\in E^A.
\end{equation*}
Let $P^A_t(x,y):=\mathbb P^A_x(X_t=y)$  be the transition kernel of $(X,\mathbb P^A)$. By mimicking Steps 1 and 2 of the proof of Theorems 2.8.3 and 2.8.4 in \cite{norris} one can show that $\mathbb P_x(X_t=y,t<T^A)$, $x,y\in E^A$, $t\geq 0$, is the minimal nonnegative solution of the backward equations associated with $Q^A$ and thus $P^A_t(x,y)=\mathbb P_x(X_t=y,t<T_A)$. Therefore we call $(X,\mathbb P^A)$ the Markov chain obtained by killing $(X,\mathbb P)$ at $T_A$. Using the semi-group property one can show that, for all $t\geq 0$, $x\in E$ and $\Gamma\in\mathcal F_t$,  $\mathbb P_x^A(\Gamma)=\mathbb P_x(\Gamma\cap\{t<T_A\})$. Then, for any stopping time $T$ one can show by first considering stopping times with a discrete range and then approximating a general stopping time by a non-increasing sequence of stopping times with a discrete range,
\begin{equation}\label{killed_process_stoptime}
\mathbb P_x^A(\Gamma\cap\{T<\infty\})=\mathbb P_x (\Gamma\cap\{T<T_A\} ), \quad \Gamma\in\mathcal F_T, \ x\in E.
\end{equation}
The resolvent of $(X,\mathbb P^A)$ is denoted by $G^A(x,y)=\int_0^\infty P^A_t(x,y) d t $, $x,y\in E^A$ and we extend it to the whole of $E\times E$ by setting $G^A(x,y)=0$ for $(x,y)\notin E^A\times E^A$. The corresponding resolvent density is denoted by $\mathtt  g^A(x,y):=G^A(x,y)/\pi(y)$, $x,y\in E$.
We can consider the dual of $(X,\mathbb P^A)$ or kill the dual at $T_A$. Clearly by looking at the Q-matrices involved, the operations of taking the dual (with respect to $\pi$) and killing at $T_A$ commute and the following {switching identity} holds:
\begin{equation}\label{switching_identity}
\widehat{\mathtt g}^A(y,x) = \mathtt g^A(x,y), \quad x,y\in E,
\end{equation}
where $\widehat G^A(y,x)$ is the resolvent and $\widehat{\mathtt g}^A(y,x)$ its density of the dual killed at $T_A$.
Very closely related to $(X,\mathbb P^A)$, is the third auxiliary Markov chain that we introduce. For $A\subseteq \mathbb R$ and writing $\overline{\mathbb P}^A=\left(\overline{\mathbb P}^A_x\right)_{x\in E_\partial}$, let
$(X,\overline{\mathbb P}^A)$ be the Markov chain with state space $E$ and generated by the Q-matrix $\overline Q^A$ given by
\begin{equation*}
\overline Q^A(x,y) =
\begin{cases}
 Q(x,y) &  \text{if $x\in E^A$ and $y\in E$}, \\
 0 & \text{if $x\in A$ and $y\in E$}.
\end{cases}
\end{equation*}
Similarly as for the previous auxiliary Markov chain, one can show that the transition kernel $\overline P^A_t(x,y):=\overline{\mathbb P}^A_x(X_t=y)$ of $(X,\overline{\mathbb P}^A)$ admits the expression $\overline P^A_t(x,y) = \mathbb P_x(X_{t\wedge T_A}=y)$, $x,y\in E$, $t\geq 0$.  Therefore we call $(X,\overline{\mathbb P}^A)$ the Markov chain obtained by stopping $(X,\mathbb P)$ at $T_A$.
Finally, for $a\in E$, with slight abuse of notation, we write $E^a_\partial:=E_\partial\cap[a,\infty)$ and $\overline{\mathbb P}^a=\left(\overline{\mathbb P}^a_x\right)_{x\in E^a_\partial}$ and denote by $(X,\overline{\mathbb P}^a)$ the Markov chain obtained by {stopping} $(X,\mathbb P)$ {at $T_a$}. Note that the state space of $(X,\overline{\mathbb P}^a)$ is chosen to be $E^a:=E\cap[a,\infty)$ and not $E$, which we can do because $(X,\mathbb P)$ is skip-free downward. Its transition kernel is denoted by $\overline P^a_t$.

\section{Potential theory and fluctuation identities of continuous-time skip-free Markov Process}\label{sec_pot_fluc}

The following two theorems are, to some extent, the analogue of the results in Section 3 and the first part of Section 4 in Choi and Patie \cite{Choi-Patie} who deal with discrete time Markov chains that are skip-free upwards.
Though there are some differences with the most notable ones being the following. First, translating their approach to our setting of continuous-time skip-free downward Markov chains, Choi and Patie \cite{Choi-Patie} distinguish between the cases (i) $\l=-\infty$ or $\l>-\infty$ with $\mathbb P_\l(X_{J_1}\in E\backslash\{\l\})=0$ on the one hand and (ii) $\l>-\infty$ with $\mathbb P_\l(X_{J_1}\in E\backslash\{\l\})>0$ on the other hand, with their results being less clean in the latter case. By using a different definition for the fundamental excessive function for the killed process than the one in Theorem 3.1(2) in \cite{Choi-Patie} (though both definitions are equivalent in case (i)), we are able to unify the two cases and at the same time give a more elementary proof that avoids the Martin boundary theory employed in \cite{Choi-Patie}.
Second, we allow for state-dependent killing whereas Choi and Patie \cite{Choi-Patie} worked with a constant killing rate.
Recall that notation and terminology used in the following two theorems can be found in Section \ref{sec_prelim}.
\begin{theorem}\label{thm_fundexc}
Let $(X,\mathbb P)$ be a Markov chain with state space $E$ generated by the Q-matrix $Q$ satisfying \eqref{skipfree_downward_assump}. Assume further all states in $E$ are transient and fix a reference point $\mfo\in E$.
\begin{enumerate}[label=(\arabic*),ref=(\arabic*)]
	\item\label{item_H} Define, for $x\in E$, $H(x)=\lim_{y\downarrow\l} \frac{G(x,y)}{G(\mfo,y)}: =
	 \begin{cases}
	\lim_{y\downarrow-\infty} \frac{G(x,y)}{G(\mfo,y)} & \text{if $\l=-\infty$}, \\
	\frac{G(x,\l)}{G(\mfo,\l)} & \text{if $\l>-\infty$}.
	\end{cases}$
	Then for $x,a\in E$ such that $x\geq a$,
	\begin{equation}\label{fpt_above_general}
		\mathbb P_x(T_a<\zeta) = \frac{H(x)}{H(a)}.
	\end{equation}
	\item\label{item_charac_H} The function $H$ is excessive for $(X,\mathbb P)$ and it is the unique positive-valued function on $E$ satisfying the following four properties: (i) nonincreasing, (ii) $H(\mfo)=1$, (iii) for any $a\in E$,  $H$ restricted to $E^a$ is harmonic for $(X,\overline{\mathbb P}^a)$, (iv) if $\r=\infty$ and if there exists $x>a$ such that $\mathbb P_x(T_a=\zeta=\infty)$, then $\lim_{x\to\infty}H(x)=0$.
	\item\label{item_resolv} Define $\widehat H(y)=\lim_{x\uparrow\r} \frac{\widehat G(y,x)}{\widehat G(\mfo,x)}$, $y\in E$, and $\Hqb(x)=\lim_{y\downarrow\l} \frac{G^{[b}(x,y)}{G(\mfo,y)}$, $x\in E$. Then for $x,y\in E$,
	\begin{equation}\label{resolv_expr}
	\mathtt g(x,y)= \con \widehat H(y) \left( H(x)-\Hqy(x) \right) \quad \text{with \ $\con=\mathtt g(\mfo,\mfo)$}.
	\end{equation}
\end{enumerate}		
\end{theorem}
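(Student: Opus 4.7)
The plan is to use three main ingredients throughout: the skip-free downward property, the strong Markov property applied at various hitting and exit times, and a bounded-convergence argument on the stopped chain $(X,\overline{\mathbb P}^a)$. For part~\ref{item_H}, the strong Markov property at $T_a$, combined with skip-free downward, gives the identity $G(x,y) = \mathbb{P}_x(T_a<\zeta)G(a,y)$ for all $y<a\leq x$, because the chain starting at $x$ must pass through $a$ before reaching any $y$ below it. Specializing to $a = \mfo$ when $x \geq \mfo$, and to $a = x$ when $x < \mfo$, shows that the ratio $G(x,y)/G(\mfo,y)$ is constant in $y$ for $y$ below both $x$ and $\mfo$; hence the defining limit for $H(x)$ is attained for such $y$, so $H$ is well-defined, positive, and finite, and the same constancy yields $H(x)/H(a) = \mathbb{P}_x(T_a<\zeta)$ for $x\geq a$.

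For part~\ref{item_charac_H}, properties~(i) and~(ii) follow immediately from part~\ref{item_H}. Excessiveness of $H$ follows by applying Fatou's lemma to $\sum_z P_t(x,z)H(z)$, using that each $G(\cdot,y)/G(\mfo,y)$ is a positive multiple of the excessive function $G(\cdot,y)$ and converges pointwise to $H$ as $y\downarrow\l$. For property~(iii), combining the identity $H = H(a)\,\mathbb{P}_\cdot(T_a<\zeta)$ on $E^a$ with a direct computation of $\overline P_t^a H(x)$, splitting according to $\{T_a\leq t\}$ and applying the Markov property at $t$ on the complementary event, yields the harmonicity. For property~(iv), I would apply~(iii) with the specific $a$ from the hypothesis and send $t\to\infty$: by transience of each state combined with skip-free, on $\{T_a=\zeta=\infty\}$ the chain satisfies $X_t\to\infty$ so $H(X_t)\to L := \lim_{x\to\infty}H(x)$, while on $\{T_a<\zeta\}$ the stopped chain eventually equals $a$ and on $\{\zeta<T_a\}$ the integrand vanishes. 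Bounded convergence, valid by nonincreasingness of $H$ on $E^a$, yields
\[
H(x) = H(a)\,\mathbb{P}_x(T_a<\zeta) + L\cdot\mathbb{P}_x(T_a=\zeta=\infty),
\]
and subtracting the identity of part~\ref{item_H} gives $L\cdot\mathbb{P}_x(T_a=\zeta=\infty) = 0$, forcing $L = 0$ under the hypothesis. Uniqueness follows by running the same bounded-convergence argument for any candidate $K$ satisfying (i)--(iv): it gives $K(x) = K(a)\,\mathbb{P}_x(T_a<\zeta)$ on $E^a$, and choosing $a = \mfo$ (with $x<\mfo$ handled by taking $a = x$) identifies $K$ with $H$ throughout $E$.

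For part~\ref{item_resolv}, I split on the sign of $x-y$. When $y\leq x$, the skip-free argument from part~\ref{item_H} yields $\mathtt g(x,y) = \mathtt g(y,y)\,H(x)/H(y)$ and $\Hqy(x) = 0$ (since the chain killed upon entering $[y,\infty)$ dies instantly from any $x\geq y$), so the target formula reduces to the diagonal identity $\mathtt g(y,y) = \con\,\widehat H(y)\,H(y)$. When $y>x$, decomposing the resolvent at $T_{[y,\infty)}$ --- the chain must enter $[y,\infty)$ before hitting $y$ --- and using $G(z,y) = H(z)\,G(y,y)/H(y)$ for $z\geq y$ gives
\[
G(x,y) = \frac{G(y,y)}{H(y)}\,\mathbb{E}_x\!\left[H\bigl(X_{T_{[y,\infty)}}\bigr)\,\mathbf{1}_{T_{[y,\infty)}<\zeta}\right].
\]
Applying the same decomposition to the pair $G(x,y')$ versus $G^{[y}(x,y')$, dividing by $G(\mfo,y')$, and restricting to $y'<y\wedge\mfo$ --- so that the constancy from part~\ref{item_H} makes $G(z,y')/G(\mfo,y') = H(z)$ for every $z\in[y,\infty)\cap E$ --- identifies the expectation with $H(x)-\Hqy(x)$, so the target formula holds modulo the diagonal identity.

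To close, I invoke the dual chain $(X,\widehat{\mathbb P})$, which is skip-free upward. The exact analogue of part~\ref{item_H} for the dual (with the limit taken toward $\r$) yields $\widehat{\mathtt g}(y,x) = \widehat H(y)\,\widehat{\mathtt g}(x,x)/\widehat H(x)$ for $y\leq x$. Combining with the switching identity $\widehat{\mathtt g}(y,x) = \mathtt g(x,y)$ and the $y\leq x$ formula $\mathtt g(x,y) = \mathtt g(y,y)\,H(x)/H(y)$ forces $\mathtt g(y,y)/[H(y)\widehat H(y)]$ to be independent of $y$; evaluating at $y=\mfo$ (where $H(\mfo) = \widehat H(\mfo) = 1$) identifies the constant as $\con$. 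The main technical challenge I anticipate is the bounded-convergence step in property~(iv), where one must rigorously identify the almost-sure limit of $X_t$ on $\{T_a=\zeta=\infty\}$: this uses transience of all states combined with skip-free to force $X_t\to\infty$, together with the convention $H(\partial) := 0$ on the killed event.
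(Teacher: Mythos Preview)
Your arguments for parts~\ref{item_H} and~\ref{item_charac_H} are essentially those of the paper: the same strong-Markov/skip-free constancy for the Martin ratios, the same Fatou-based excessiveness, the same direct computation for harmonicity of the stopped chain, and the same bounded-convergence identity $h(x)=h(a)\mathbb P_x(T_a<\zeta)+\lim_{z\to\infty}h(z)\,\mathbb P_x(T_a=\zeta=\infty)$ for both property~(iv) and uniqueness.

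For part~\ref{item_resolv} your route is correct but differs from the paper's. The paper works on the \emph{dual} side: it decomposes $\widehat G(y,x)$ at the dual's first passage to $[b,\infty)$, which by skip-free upward of $(X,\widehat{\mathbb P})$ is the hitting time $T_b$, applies the dual version of part~\ref{item_H} to get $\widehat{\mathbb P}_y(T_b<\zeta)=\widehat H(y)/\widehat H(b)$, and then uses the switching identity \eqref{switching_identity} to transfer this to the original resolvent. This yields directly $\mathtt g(x,b)=\con\widehat H(b)(H(x)-\Hqb(x))$ for all $x,b\in E$ without a case split. You instead decompose the \emph{original} chain at $T_{[y}$, which may overshoot, and then eliminate the overshoot by recognising that $G(z,y')/G(\mfo,y')=H(z)$ for all $z\geq y$ once $y'<y\wedge\mfo$; the dual enters only to pin down the diagonal constant $\mathtt g(y,y)=\con H(y)\widehat H(y)$. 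Your approach is slightly more ``primal'' in that the dual is used only for the normalisation, at the cost of the $y\leq x$ versus $y>x$ case distinction and the extra step of identifying the overshoot expectation. The paper's approach is cleaner because the dual's skip-free upward property makes the relevant first passage an exact hitting time, avoiding any overshoot analysis.
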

\begin{proof}
In the proof $x,y,z,a,b$ will always denote elements in $E$.

\ref{item_H}. We first prove the following claim:
\begin{equation}\label{martinkernel_eventuallyconstant}
\frac{G(x,y)}{G(\mfo,y)} = \frac{G(x,x\wedge \mfo)}{G(o,x\wedge \mfo)}  \quad  y\leq x\wedge \mfo.
\end{equation}
By the strong Markov property, $G(z,a)=\mathbb P_z(T_{a}<\zeta)G(a,a)$ for any $z,a\in E$.
Hence,
\begin{equation*}
\begin{split}
\frac{G(b,y)}{G(y,y)} =  \mathbb P_b(T_{y}<\zeta) = & \mathbb P_b (T_{z}<\zeta)\mathbb P_z(T_{y}<\zeta)
=   \frac{G(b,z)}{G(z,z)}\frac{G(z,y)}{G(y,y)}, \quad y\leq z\leq b,
\end{split}
\end{equation*}
where the middle equality follows by the strong Markov property and the skip-free downward property of $(X,\mathbb P)$.
When $x\leq\mfo$, take $z=x$ and $b=\mfo$, to get for $y\leq x\leq \mfo$, $G(\mfo,y) =   \frac{G(\mfo,x)}{G(x,x)} G(x,y)$ which shows \eqref{martinkernel_eventuallyconstant} in the case where  $x\leq\mfo$. Similarly, if $\mfo\leq x$, take $z=\mfo$ and $b=x$ instead to get for $y\leq \mfo\leq x$, $G(x,y)=\frac{G(x,\mfo)}{G(\mfo,\mfo)}G(\mfo,y)$, which shows \eqref{martinkernel_eventuallyconstant} in the case where $x\geq\mfo$. From \eqref{martinkernel_eventuallyconstant} it follows that $H(x)=\lim_{y\downarrow\l} \frac{G(x,y)}{G(\mfo,y)}$ is well-defined. By the strong Markov property and the skip-free downward property, $G(x,y)=\mathbb P_x(T_{a}<\zeta)G(a,y)$  for $y\leq a\leq x$. Hence
\begin{equation*}
\mathbb P_x(T_{a}<\zeta) = \frac{G(x,y)}{G(a,y)} = \frac{G(x,y)/G(\mfo,y)}{G(a,y)/G(\mfo,y)}, \quad y\leq a\leq x.
\end{equation*}
Now taking limits as $y\downarrow\l$ yields \eqref{fpt_above_general}.

\ref{item_charac_H}. By the semi-group property of $P_t$, for $y\leq x$,
\begin{equation*}
\sum_{z\in E} P_t(x,z) G(z,y) =\sum_{z\in E} P_t(x,z) \int_0^\infty P_s(z,y) d s = \int_0^\infty P_{t+s}(x,y) d s = \int_t^\infty P_u(x,y)d u < G(x,y). %
\end{equation*}
So if $\l>-\infty$, $P_t H(x)<H(x)$ for all $x\in E$ and so $H$ is excessive (but not harmonic) for $(X,\mathbb P)$. On the other hand if $\l=-\infty$, then $H$ is the limit as $n\to\infty$ of the excessive functions $H_n(x):=\frac{G(x,-n)}{G(\mfo,-n)}$, $-n\in E$, and thus is excessive since by Fatou's lemma,
\begin{equation*}
P_t H = P_t \lim H_n \leq \liminf P_t H_n \leq \liminf H_n = H.
\end{equation*}
The function $H$ is nonincreasing by \eqref{fpt_above_general} and $(X,\mathbb P)$ being skip-free downward. Obviously, $H(\mfo)=1$ by definition.
Fix $a\in E$ and let $h_a(x):=\mathbb P_x(T_a<\zeta)$, $x\in E^a$. Then for any $x\in E^a$ and $t\geq 0$,
\begin{equation*}
\begin{split}
\overline P_t^a h_a(x) = & \mathbb E_x \left[ h_a(X_{t\wedge T_a})\1_{\{t\wedge T_a<\zeta\}} \right] \\
= & \mathbb E_x \left[ \1_{\{t\geq T_a,T_a<\zeta\}}\mathbb P_{X_{T_a}}(T_a<\zeta) \right] + \mathbb E_x \left[ \1_{\{t<T_a\wedge\zeta\}}\mathbb P_{x}(T_a<\zeta|\mathcal F_t) \right] \\
= &  \mathbb P_x(t\geq T_a,T_a<\zeta) + \mathbb P_{x}(t<T_a\wedge\zeta,T_a<\zeta) \\
= & h_a(x),
\end{split}
\end{equation*}
where we have used the Markov property in the second equality. So $h_a$ is harmonic for $(X,\overline{\mathbb P}^a)$. Then by \eqref{fpt_above_general} $H$ restricted to $E^a$ is harmonic for  $(X,\overline{\mathbb P}^a)$.
Let $h:E\to(0,\infty)$ satisfy (i) and (iii). Then for $x\in E^a$,
\begin{equation}\label{uniqueness_eq}
h(x) {=} \lim_{t\to\infty} \overline P_t^a h(x)  {=}  \mathbb E_x \left[ \lim_{t\to\infty} h(X_{t\wedge T_a})\1_{\{t\wedge T_a<\zeta\}} \right] = h(a)\mathbb P_x(T_a<\zeta) + \lim_{y\to\infty}h(y) \mathbb P_x(T_a=\zeta=\infty) ,
\end{equation}
where the second equality is due to the dominated convergence theorem which is applicable because $h$ is nonincreasing and positive, $x\geq a$ and $(X,\mathbb P)$ is skip-free downward and for the last equality we used that by transience $X_t\to\infty$ as $t\to\infty$ if $T_a=\zeta=\infty$ (which can only happen if $\r=\infty$). So if $h$ satisfies (i)-(iv), then we conclude $h(x) = h(a)\mathbb P_x(T_a<\zeta)$ for all $x\geq a$, which yields that $h$ is proportional to $H$ on $E^a$ by \eqref{fpt_above_general}.  Since $a\in E$ was chosen arbitrarily, $h$ is positive-valued and $h(\mfo)=H(\mfo)$, it follows that  $h=H$ on $E$ if $h$ satisfies (i)-(iv). Since we have showed that $H$ satisfy (i) and (iii), \eqref{uniqueness_eq} holds for $H$ and thus $H$ satisfies property (iv) by \eqref{fpt_above_general}.

\ref{item_resolv}. By the strong Markov property and the skip-free upward property of the dual, for $x,y,b\in E$ such that $y\leq b$,
\begin{equation*}
\widehat G(y,x) = \widehat G^{[b}(y,x) + \widehat{\mathbb P}_y (T_{[b}<\zeta) \widehat G(b,x)
= \widehat G^{[b}(y,x) + \frac{\widehat H(y)}{\widehat H(b)} \widehat G(b,x),
\end{equation*}
where for the second equality we used the analogue of \eqref{fpt_above_general} for the dual which is justified because $(X,\widehat{\mathbb P})$ is skip-free upward and all its states are transient. Then  by the switching identity \eqref{switching_identity},
\begin{equation}\label{killedresolvent}
G^{[b}(x,y) %
= G(x,y) -   G(x,b)\frac{\widehat H(y)\pi(y)}{\widehat H(b)\pi(b)}, \quad y\leq b.
\end{equation}
Since by the same arguments $\frac{G(\mfo,y)}{\pi(y)}=\mathtt g(\mfo,y)=\widehat{\mathtt g}(y,\mfo)=\widehat{\mathbb P}_y(T_\mfo\leq\zeta)\mathtt g(\mfo,\mfo)=\con\frac{\widehat H(y)}{\widehat H(\mfo)}$ for $y\leq \mfo$,
\begin{equation*}
\frac{G^{[b}(x,y)}{G(\mfo,y)} %
= \frac{G(x,y)}{G(\mfo,y)} -   \frac{G(x,b)}{\con\widehat H(b)\pi(b)}, \quad  y\leq b\wedge \mfo.
\end{equation*}	
Taking limits as $y\downarrow -\l$ shows that $\Hqb$ is well-defined and $\Hqb(x)=H(x)-\frac{G(x,b)}{\con\widehat H(b)\pi(b)}$. The expression for the resolvent density of $G$ then follows.
\end{proof}

\begin{remark}\label{remark_harm}
	\begin{enumerate}[leftmargin=2em]
		\item The function $H$ is not necessarily harmonic for $(X,\mathbb P)$ if $\l=-\infty$, which is in contrast to the discrete time case, see \cite{Choi-Patie}. For instance when $(X,\mathbb P)$ is the negative of a birth process which explodes, then $H\equiv 1$ but $H$ is not harmonic since the lifetime is finite a.s..
		
		\item If $h:E\to(0,\infty)$ is harmonic for $(X,\mathbb P)$ then $h$ satisfies property (iii) in Theorem \ref{thm_fundexc}\ref{item_charac_H}. Indeed, if $h$ is harmonic, then by the Markov property for any $s\leq t$ and $x\in E$, $\mathbb E_x[h(X_t)\1_{\{t<\zeta\}}|\mathcal F_s]=\1_{\{s<\zeta\}} \mathbb E_{X_s}[h(X_{t-s})\1_{\{t-s<\zeta\}}]=h(X_s)\1_{\{s<\zeta\}}$. So $(h(X_t)\1_{\{t<\zeta\}})_{t\geq 0}$ is a martingale and therefore so is $(h(X_{t\wedge T_a})\1_{\{{t\wedge T_a}<\zeta\}})_{t\geq 0}$ for any $a\in E$. This yields, for any $a\in E$, $x\in E^a$ and $t\geq 0$, $\overline P_t^a h(x)=\overline P_0^a h(x)=h(x)$.
		
		\item Property (iv) in Theorem \ref{thm_fundexc}\ref{item_charac_H} is needed to characterise $H$. Indeed, if $\r=\infty$ and $(X,\mathbb P)$ is such that $\lim_{t\to\infty}X_t=\infty$ $\mathbb P_x$-a.s., then $h\equiv 1$ satisfies properties (i)-(iii) but $H\neq h$.
	\end{enumerate}
\end{remark}

\begin{theorem}\label{thm_fluc}
	Under the assumptions of Theorem \ref{thm_fundexc} and using its notation we have the following.
\begin{enumerate}[label=(\arabic*),ref=(\arabic*)]
	\item\label{item_fht} For any $x,y\in E$,
	\begin{equation}\label{fht_expr}
	\mathbb P_x(T_y<\zeta) = \frac{H(x)-\Hqy(x)}{H(y)}.
	\end{equation}

	\item\label{item_twosidedexit} For any $x,a,b\in E$ such that $x\geq a$ and $b>a$,
	\begin{equation}\label{twosidedexit}
	\mathbb P_x(T_a<T_{[b}\wedge\zeta) = \frac{\Hqb(x)}{\Hqb(a)}.
	\end{equation}

	\item\label{item_dynkin} Let $a,b\in E$ such that $a<b$. Assume $f:E\to\mathbb R$ is such that $Q|f|(x)<\infty$ for all $x\in E^{(a,b)^\complement}$ where $|f|(x):=|f(x)|$. Then for $x\in E$,
	\begin{equation}\label{dynkin}
	\mathbb E_x \left[ f \left( X_{T_{(a,b)^\complement}} \right) \1_{\{ T_{(a,b)^\complement} <\zeta \}} \right]
	= f(x) + \sum_{z\in E^{(a,b)^\complement}} Qf(z) G^{{(a,b)^\complement}}(x,z),
	\end{equation}
	where the density of $G^{{(a,b)^\complement}}$
	 (with respect to $\pi$) admits the expression,
	\begin{equation}\label{2sidedkilledresol_expr}
	\mathtt g^{{(a,b)^\complement}}(x,y) = \con  \widehat H(y) \left( \frac{ \Hqb(x)}{\Hqb(a)}\Hqy(a) -\Hqy(x) \right) , \quad x,y\in E^{(a,b)^\complement}.
	\end{equation}
	In particular for $x\in E$,
	\begin{equation}\label{exit_interval}
	\mathbb P_x (T_{(a,b)^\complement} <\zeta)
	 =	1 - \sum_{z\in E^{(a,b)^\complement}} Q(z,\partial) G^{{(a,b)^\complement}}(x,z).
	\end{equation}
	
	\item\label{item_fpt_downward} If $\l=-\infty$, denote $\Hqb(-\infty):=\lim_{x\to-\infty}\Hqb(x)\in(0,\infty]$. Then for $x,b\in E$,
	\begin{equation}\label{fpt_downward}
	\mathbb P_x(T_{[b}<\zeta) =
	\begin{cases}
	1 - \sum_{z\in E^{[b}} Q(z,\partial) G^{[b}(x,z) - \frac{\Hqb(x)}{\Hqb(-\infty)} & \text{if $\l=-\infty$}, \\
	1 - \sum_{z\in E^{[b}} Q(z,\partial) G^{[b}(x,z)  & \text{if $\l>-\infty$},
	\end{cases}
	\end{equation}
	where the density of $G^{[b}$ admits the expression,
	\begin{equation}\label{1sidedkilledresol_expr}
	\mathtt g^{[b}(x,y) = \con \widehat H(y) \left( \Hqb(x) - \Hqy(x) \right)  , \quad x,y\in E^{[b}.
	\end{equation}
	
\end{enumerate}
\end{theorem}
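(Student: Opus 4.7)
The plan is to prove the four parts sequentially, leveraging Theorem \ref{thm_fundexc} repeatedly. For Part \ref{item_fht}, by the strong Markov property at $T_y$, $G(x,y) = \mathbb P_x(T_y < \zeta) G(y,y)$, so $\mathbb P_x(T_y < \zeta) = \mathtt g(x,y)/\mathtt g(y,y)$; substituting \eqref{resolv_expr} and noting $\Hqy(y) = 0$ (since $y \notin E^{[y}$ and therefore $G^{[y}(y,\cdot) \equiv 0$) yields \eqref{fht_expr}. For Part \ref{item_twosidedexit}, apply Theorem \ref{thm_fundexc}\ref{item_H} to the killed chain $(X, \mathbb P^{[b})$, which is still skip-free downward, transient, and has state space $E^{[b}$. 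Identifying $\mathbb P_x(T_a < T_{[b} \wedge \zeta) = \mathbb P_x^{[b}(T_a < \zeta^{[b})$ via \eqref{killed_process_stoptime}, the conclusion reduces to verifying $H^{[b}(x)/H^{[b}(a) = \Hqb(x)/\Hqb(a)$; this holds because $H^{[b}$ and $\Hqb|_{E^{[b}}$ differ only by a multiplicative constant (the constant being $\lim_{y\downarrow\l} G^{[b}(\mfo,y)/G(\mfo,y)$, which I expect to equal $\Hqb(\mfo)$), and such a constant cancels in the ratio.

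For Part \ref{item_dynkin}, I first establish the resolvent formula \eqref{2sidedkilledresol_expr} via the strong Markov property at $T_a$ applied to the $[b$-killed chain: for $x, y \in E^{[b}$ with $x \geq a$,
\begin{equation*}
G^{[b}(x,y) = G^{(a,b)^\complement}(x,y)\1_{\{y \in E^{(a,b)^\complement}\}} + \mathbb P_x(T_a < T_{[b} \wedge \zeta)\, G^{[b}(a,y),
\end{equation*}
and substituting \eqref{1sidedkilledresol_expr} together with Part \ref{item_twosidedexit} yields \eqref{2sidedkilledresol_expr}. For the Dynkin identity \eqref{dynkin}, I integrate Kolmogorov's forward equation $\frac{d}{dt} P^A_t(x,y) = \sum_{z \in E^A} P^A_t(x,z) Q(z,y)$ over $[0,\infty)$; finiteness of $E^A = (a,b) \cap E$ legitimises the sum-integral interchange, and transience gives $P^A_\infty \equiv 0$, whence $\sum_{z \in E^A} G^A(x,z) Q(z,y) = -\delta_{xy}$ for $x,y \in E^A$. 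Splitting $Qf(z) = \sum_{y \in E^A} Q(z,y)f(y) + \sum_{y \in A} Q(z,y)f(y)$, the first piece contributes $-f(x)$, while the second equals $\mathbb E_x[f(X_{T_A})\1_{\{T_A<\zeta\}}]$ via the counting identity $\sum_{z \in E^A} G^A(x,z) Q(z,y) = \mathbb P_x(X_{T_A}=y, T_A<\zeta)$ for $y \in A$ (the expected number of jumps $z\to y$ prior to $T_A\wedge\zeta$, which is at most one since $y \in A$). Identity \eqref{exit_interval} then follows from \eqref{dynkin} with $f \equiv 1$ and $Qf(z) = -Q(z,\partial)$.

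For Part \ref{item_fpt_downward}, formula \eqref{1sidedkilledresol_expr} is already implicit in the proof of Theorem \ref{thm_fundexc}\ref{item_resolv} (combining \eqref{killedresolvent} with $\Hqb(x) = H(x) - G(x,b)/(\con\widehat H(b)\pi(b))$ and \eqref{resolv_expr}). For the probability, I decompose
\begin{equation*}
1 = \mathbb P_x(T_{[b} < \zeta) + \mathbb P_x(\zeta < T_{[b}) + \mathbb P_x(T_{[b} = \zeta = \infty),
\end{equation*}
where the three events are disjoint (since $\zeta = T_{[b} < \infty$ would force $X_{T_{[b}} = \partial \notin [b$). The event $\{\zeta < T_{[b}\}$ splits into direct jumps to $\partial$ (with probability $\sum_z Q(z,\partial) G^{[b}(x,z)$, by the same counting argument as in Part \ref{item_dynkin}) and, in case $\l = -\infty$, downward explosion. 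When $\l > -\infty$, $E^{[b}$ is finite so the killed chain is non-explosive, and transience ($\mathbb E_x[\zeta^{[b}] = \sum_y G^{[b}(x,y) < \infty$) forces $\mathbb P_x(T_{[b} = \zeta = \infty) = 0$, giving the second line of \eqref{fpt_downward}. When $\l = -\infty$, I take $a \to -\infty$ in Part \ref{item_twosidedexit}: continuity from above along the decreasing family $\{T_a < T_{[b} \wedge \zeta\}_{a \in E}$ gives $\Hqb(x)/\Hqb(-\infty) = \mathbb P_x(\bigcap_{a \in E}\{T_a < T_{[b} \wedge \zeta\})$, and one verifies this intersection coincides exactly with the union of $\{T_{[b} = \zeta = \infty\}$ and the downward-explosion event (on which $X$ visits every $a$ before $J_\infty = \zeta$). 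Combining the three contributions yields the first line of \eqref{fpt_downward}. The principal obstacle is this last step: parsing $\bigcap_a\{T_a < T_{[b} \wedge \zeta\}$ correctly into its drift-to-$-\infty$ and downward-explosion components, and recognising that the naive Dynkin approach from Part \ref{item_dynkin} cannot be recycled in the one-sided infinite setting because the sum-integral interchange behind $\sum_z G^{[b}(x,z) Q(z,y) = -\delta_{xy}$ is not guaranteed when $E^{[b}$ is infinite.
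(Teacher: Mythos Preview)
Your proof is correct and largely parallels the paper's. Parts \ref{item_fht}, \ref{item_twosidedexit}, and the resolvent formulas \eqref{2sidedkilledresol_expr}, \eqref{1sidedkilledresol_expr} are handled essentially identically. The two places where your route diverges are the following.

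For the Dynkin identity \eqref{dynkin}, the paper works with the \emph{stopped} chain $(X,\overline{\mathbb P}^{(a,b)^\complement})$: it writes the forward equation for $\overline P^{(a,b)^\complement}_t f$ and sends $t\to\infty$ by dominated convergence, having first reduced to bounded $f$. Your approach via the \emph{killed} chain---the identity $\sum_{z\in E^A} G^A(x,z)Q(z,y)=-\delta_{xy}$ for $y\in E^A$ together with the boundary counting identity $\sum_{z\in E^A} G^A(x,z)Q(z,y)=\mathbb P_x(X_{T_A}=y,\,T_A<\zeta)$ for $y\in A$---is an equally valid and arguably more direct alternative; it avoids the reduction to bounded $f$ because the interchanges are justified directly by the hypothesis $Q|f|(z)<\infty$ and the finiteness of $E^{(a,b)^\complement}$.

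For Part \ref{item_fpt_downward} with $\l=-\infty$, the paper's argument is considerably simpler than yours: it writes
\[
\mathbb P_x(T_{[b}<T_a\wedge\zeta)=\mathbb P_x(T_{(a,b)^\complement}<\zeta)-\mathbb P_x(T_a<T_{[b}\wedge\zeta),
\]
substitutes \eqref{exit_interval} and \eqref{twosidedexit}, and takes $a\to-\infty$ using monotone convergence and the observation that $T_a\wedge\zeta\to\zeta$. This completely sidesteps the pathwise parsing of $\bigcap_a\{T_a<T_{[b}\wedge\zeta\}$ into drift-to-$-\infty$ and downward-explosion components that you correctly flag as the delicate step in your approach. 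Your decomposition is nonetheless valid (the key point being that explosion while confined below $b$ forces $\liminf_n X_{J_n}=-\infty$, since any state visited infinitely often would contribute infinite holding time), and it has the merit of giving a probabilistic interpretation of the correction term $\Hqb(x)/\Hqb(-\infty)$ that the paper's limiting argument does not make explicit.
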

\begin{remark}
	Although somewhat hidden, the identities in Theorem \ref{thm_fluc} provide in particular expressions for the Laplace transform of the various first hitting/passage times by including a constant term in the killing rates $Q(x,\partial)$, $x\in E$. Namely, if, for $q>0$, $(X,\mathbb P^{(q)})$ is the Markov chain with state space $E$ generated by the Q-matrix $Q^{(q)}$ given by $Q^{(q)}(x,y)=Q(x,y)- q \delta_{xy}$, then its transition kernel is given by $P_t^{(q)}(x,y)=e^{-qt}P_t(x,y)$ and so, following the arguments
	leading up to \eqref{killed_process_stoptime}, one has for any stopping time $T$,
	\begin{equation*}
	\mathbb P^{(q)}_x(\Gamma\cap \{T<\infty\})=\mathbb E_x \left[ e^{-q T} \1_{\Gamma\cap \{T<\infty\}} \right], \quad \Gamma\in\mathcal F_T, \ x\in E.
	\end{equation*}
	So using the identities in Theorem \ref{thm_fluc} for $(X,\mathbb P^{(q)})$ yields identities involving the Laplace transform of the corresponding first hitting/passage time with Laplace parameter $q$ for $(X,\mathbb P)$, see also Remark \ref{remark_MBI_killingparam} below.
\end{remark}
\begin{proof}
\ref{item_fht}. By the strong Markov property $G(x,y)=\mathbb P_x(T_{y}<\zeta)G(y,y)$ for $x,y\in E$. The identity \eqref{fht_expr} then follows from \eqref{resolv_expr} since $\Hqy(x)=0$ for $x\geq y$.

\ref{item_twosidedexit}. Since $(X,\mathbb P^{[b})$ is skip-free downward with all states transient \eqref{fpt_above_general} is applicable for this Markov chain. Hence by \eqref{killed_process_stoptime} and with $H^{[b}(x):=\lim_{y\downarrow\l} \frac{G^{[b}(x,y)}{G^{[b}(\mfo^{[b},y)}$ where $\mfo^{[b}\in E^{[b}$ is a reference point,
\begin{equation*}
\mathbb P_x(T_a<T_{[b}\wedge\zeta) = \mathbb P^{[b}(T_a<\zeta) = \frac{H^{[b}(x)}{H^{[b}(a)}, \quad x\geq a,
\end{equation*}
The required identity then follows since $\Hqb(x)=\lim_{y\downarrow\l} \frac{G^{[b}(x,y)}{G^{[b}(\mfo^{[b},y)} \frac{G^{[b}(\mfo^{[b},y)}{G(\mfo,y)} = H^{[b}(x)\Hqb(o^{[b})$, $x\in E$.

\ref{item_dynkin}. We first prove \eqref{2sidedkilledresol_expr} and \eqref{1sidedkilledresol_expr}. The latter follows directly from \eqref{killedresolvent} and \eqref{resolv_expr}. Since by the strong Markov property and $(X,\mathbb P)$ being skip-free downward,
\begin{equation*}
\mathtt g^{[b}(x,y) = \mathtt g^{{(a,b)^\complement}}(x,y) +  \mathbb P_x^{[b}(T_a<\zeta)\mathtt g^{[b}(a,y), \quad x\geq a, y\in E,
\end{equation*}
\eqref{2sidedkilledresol_expr} follows from \eqref{twosidedexit} and \eqref{1sidedkilledresol_expr}. Now we prove \eqref{dynkin}. Note that without loss of generality we can assume that $f$ is bounded because the general case  can be proved by approximation from the case where $f$ is bounded.
By the forward equations, $\overline P^{(a,b)^\complement}_t(x,y) = \delta_{xy} + \int_0^t \sum_{z\in E}\overline  P^{(a,b)^\complement}_s(x,z) \overline Q^{(a,b)^\complement}(z,y) d s$, $x,y\in E$, $t\geq 0$. Therefore, for $f:E\to\mathbb R$ bounded, $x\in E$ and $t\geq 0$, by definition of $\overline Q^{(a,b)^\complement}$,
 \begin{equation*}
 \begin{split}
 \left( \overline P^{(a,b)^\complement}_t f \right) (x) = & f(x) + \sum_{z\in E^{(a,b)^\complement}} Q f(z) \int_0^t \overline  P^{(a,b)^\complement}_s(x,z) d s,
 \end{split}
 \end{equation*}
 where the interchanging of the two sums is justified because $E^{(a,b)^\complement}$ is a finite set.
 By the transience assumption, $T_{(a,b)^\complement}\wedge\zeta<\infty$ $\mathbb P_x$-a.s. for all $x\in  E^{(a,b)^\complement}$. As $f$ is assumed to be bounded, the dominated convergence theorem yields,
 \begin{equation*}
 \begin{split}
 \mathbb E_x \left[ f(X_{T_{(a,b)^\complement}})\mathbf 1_{\{T_{(a,b)^\complement}<\zeta\}} \right]
 = & \lim_{t\to\infty} \mathbb E_x \left[ f(X_{t\wedge T_{(a,b)^\complement}})\mathbf 1_{\{t\wedge T_{(a,b)^\complement}<\zeta\}} \right] \\
 = &  \lim_{t\to\infty} \left( \overline P^{(a,b)^\complement}_t f \right) (x) \\
 = & f(x) + \sum_{z\in E^{(a,b)^\complement}} Q f(z) \int_0^\infty \overline  P^{(a,b)^\complement}_s(x,z) d s \\
 = & f(x) + \sum_{z\in E^{(a,b)^\complement}} Q f(z) G^{(a,b)^\complement}(x,z).
 \end{split}
 \end{equation*}
The identity \eqref{exit_interval} then simply follows by taking $f\equiv 1$ and the definition of $Q(x,\partial)$.

\ref{item_fpt_downward}. When $\l>-\infty$ we can use the same arguments for  \eqref{exit_interval} to prove \eqref{fpt_downward} since $E^{[b}$ is a finite set and $T_{[b}\wedge\zeta<\infty$ $\mathbb P_x$-a.s. for all $x\in  E^{[b}$ when  $\l>-\infty$. Now assume $\l=-\infty$. By \eqref{exit_interval} and \eqref{twosidedexit}, for $x,a,b\in E$ such that $a<b,x$,
\begin{equation*}
\mathbb P_x(T_{[b}< T_a\wedge\zeta) = \mathbb P_x (T_{(a,b)^\complement} <\zeta) - \mathbb P_x(T_a<T_{[b}\wedge\zeta)
= 1  - \sum_{z\in E^{(a,b)^\complement}} Q(z,\partial) G^{{(a,b)^\complement}}(x,z) - \frac{\Hqb(x)}{\Hqb(a)} .
\end{equation*}
Since $\l=-\infty$ we have for any $x\in E$, $\mathbb P_x$-a.s., $T_a\wedge\zeta\to \zeta$ as $a\to-\infty$ since all states are assumed to be transient and the Markov chain is sent to the cemetery state when explosion occurs. So \eqref{fpt_downward} for $\l=-\infty$ follows by taking $a\to-\infty$ in the last equation and using the monotone convergence theorem.
\end{proof}

\section{Downward skip-free compound Poisson processes}\label{sec_CPP}
In this section, we apply the methodology described in Section \ref{sec_pot_fluc} to recover various fluctuation identities for downward skip-free $\mathbb Z$-valued compound Poisson processes.  We recall that for the continuous-state space analogue of this class of Markov processes, namely spectrally positive L\'evy processes, this problem has been well studied and has found an impressive range of applications, such as insurance mathematics, epidemiology, financial mathematics and queuing theory, see \cite{Kyprianou, Doney}. For the discrete-time analogue, i.e.~for random walks, Spitzer \cite{Spitzer} solved this problem by means of the celebrated Wiener-Hopf factorization and alternative interesting proofs,  based either on excursion theory or martingales devices have been proposed, see again \cite{Kyprianou, Doney} and the references therein. However, all of these approaches rely on the stationarity and independent increments property of the random walks/L\'evy processes, which makes it difficult to extend them to a wider context. Note that the continuous-time, discrete-state space case considered here has been studied in \cite{Vidmar_levychain}.

In this section we assume that $E=\mathbb Z$  and the Q-matrix generating $(X,\mathbb P)$ is given by, for $x,y\in \mathbb Z$,
\begin{equation}\label{Qmat_CPP}
Q(x,y)=
\begin{cases}
\mu(y-x+1) \alpha   & \text{if $y\geq x+1$ or $y=x-1$}, \\
-(\alpha+p)  & \text{if $y=x$}, \\
0 & \text{if $y\leq x-2$},
\end{cases}
\end{equation}
where $\alpha>0$, $\mu=(\mu(j))_{j\geq 0}$ is a probability measure with $\mu(0)>0$ and $\mu(1)=0$ and $p\geq 0$. Note that $Q(x,\partial)=p$ for $x\in\mathbb Z$. This means that the holding times are i.i.d.~and exponentially distributed and the jump sizes are i.i.d.~and independent from the holding times. So $(X,\mathbb P)$ corresponds to a $\mathbb Z$-valued skip-free downward compound Poisson process with intensity $\alpha$, jump distribution $\mathbb P_0(X_{J_1}=j)=\mu(j+1)$, $j\geq -1$, and killed at the constant rate $p$. For $f:E\to\mathbb R$ we denote its generating function (GF) by $f^*[s]:=\sum_{y\in E}s^y f(y)$. We have for $x\in\mathbb Z$ and $s\in(0,1]$,
\begin{equation}\label{GF_Qmat_CPP}
Q(x,\cdot)^*[s] = \psi(s) s^{x-1},
\end{equation}
where, setting $\mu(j)=0$ for $j\leq -1$,
\begin{equation*}
\psi(s) = \alpha \left( \mu^*[s] -s \right) - p s.
\end{equation*}
Since $\psi$ is convex,  $\psi(0)=\alpha\mu_0>0$ and $\psi(1)=-p\leq 0$, we have $s_0 \in (0,1]$ and $\psi(s)>0$ for $s\in[0,s_0)$. Note that $P_t(x,\cdot)^*[s]$ is well-defined for $s=e^{i u}$ for $u\in\mathbb R$. Using the forward equations, Fubini and \eqref{GF_Qmat_CPP}, we get for $t\geq 0$, $x\in\mathbb Z$ and $s=e^{i u}$ with $u\in\mathbb R$,
\begin{equation*}
P_t(x,\cdot)^*[s] = 1 + \int_0^t \sum_{z\in\mathbb Z}P_v(x,z) Q(z,\cdot)^*[s] d v
= 1 + \frac{\psi(s)}s \int_0^t P_v(x,\cdot)^*[s] d v.
\end{equation*}
So by taking derivatives in $t$, we get $\frac d{d t} P_t(x,\cdot)^*[s]=\frac{\psi(s)}s P_t(x,\cdot)^*[s]$. Solving this simple ODE with the boundary condition $P_0(x,\cdot)^*[s]=s^x$ yields, for $t\geq 0$, $x\in\mathbb Z$ and $s=e^{i u}$ with $u\in\mathbb R$,
\begin{equation}\label{GF_CPP}
\mathbb E_x \left[ s^{X_t} \1_{\{t<\zeta\}} \right] = P_t(x,\cdot)^*[s] = s^x \exp \left( \frac{\psi(s)}s t \right).
\end{equation}
By analytic extension \eqref{GF_CPP} also holds for $s\in(0,1]$.
Also \eqref{GF_CPP} implies the spatial homogeneity property $P_t(x,y)=P_t(z,y-x+z)$ for $x,y,z\in\mathbb Z$. Clearly all states are transient if $p>0$. Further, as can be deduced from the theory of random walks, if $p=0$ then all states are transient if and only if $\mathbb E_0[X_1]\neq 0$ and by differentiating in $s$ both sides of \eqref{GF_CPP} we see that $\mathbb E_0[X_1]=\psi'(1)=\alpha(m-1)$ where $m:=\sum_{j\geq 2}j\mu(j)$. Hence all states are transient for $(X,\mathbb P)$ if and only if $p>0$ or $m\neq 1$. Otherwise all states are recurrent.
In order to present the fluctuation identities we introduce the function $\scale:\mathbb Z\to[0,\infty)$ defined by $\scale(x)=0$ for $x<0$, $\scale(0)=\frac1{\alpha\mu(0)}$ and for $x\geq 0$,
\begin{equation}\label{def_scale_CPP}
 \scale(x+1) = \frac1{\alpha\mu(0)} +  \frac1{\mu(0)}\sum_{j=0}^x \scale(j) \left( \overline \mu(x-j+1) + \frac p \alpha \right),
\end{equation}
where  $\overline \mu(k)=1-\sum_{j\leq k}\mu(j)$. It is easy to verify that the function $\scale$ on $\mathbb Z\cap[0,\infty)$ is identical to the one defined in Lemma \ref{lem_recursions} below and so from that lemma we get that the GF of $\scale$ equals $\scale^*[s]=\frac1{\psi(s)}$ for $s\in[0,s_0)$.
We now provide a suite of fluctuation identities for skip-free downward compound Poisson processes (killed at a constant rate). Recall that some of these identities can be found in \cite{Vidmar_levychain} whereas all of them for the closely related class of spectrally negative L\'evy processes can be found in Chapter 8 of \cite{Kyprianou}.

\begin{theorem}\label{thm_CPP}
	We have the following.
	\begin{enumerate}[label=(\arabic*),ref=(\arabic*)]
		\item\label{item_CPP_resolv} If $p>0$ or $m\neq 1$, then the resolvent of $(X,\mathbb P)$ is  given by, for $x,y\in\mathbb Z$,
		\begin{equation*}
		\begin{split}
		G(x,y) %
		=  -\frac{s_0^{x-y}}{\psi'(s_0)} - \scale(y-x-1).
		\end{split}
		\end{equation*}

		\item\label{item_CPP_htp} In any case, for $x,y\in\mathbb Z$,
		\begin{equation*}
		\mathbb P_x(T_y<\zeta) =	s_0^{x-y} +\psi'(s_0)\scale(y-x-1).
		\end{equation*}
				
		\item\label{item_CPP_fluc} In any case, for $a\leq x\leq b-1$,
		\begin{equation*}
		\begin{split}
		\mathbb P_x(T_{a}<T_{[b}\wedge\zeta) =& \frac{\scale(b-x-1)}{\scale(b-a-1)}, \\
		\mathbb P_x (T_{(a,b)^\complement} <\zeta)
		= &
		1 +  p\sum_{z=0}^{b-x-2}\scale(z) -  \frac{\scale(b-x-1)}{\scale(b-a-1)}  p  \sum_{z=0}^{b-a-2} \scale(z) , \\
		\mathbb P_x (T_{[b} <\zeta) %
		= &
		\begin{cases}
		1 +  p\sum_{z=0}^{b-x-2}\scale(z) - \frac{p s_0}{1-s_0} \scale(b-x-1) & \text{if $p>0$ or $m>1$}, \\
		1   + \psi'(1) \scale(b-x-1)   & \text{if $p=0$ and $m\leq 1$}.
		\end{cases}
		\end{split}
		\end{equation*}
	
	\end{enumerate}		
\end{theorem}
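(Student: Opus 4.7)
The plan is to apply Theorems \ref{thm_fundexc} and \ref{thm_fluc} to the compound Poisson setting, using spatial homogeneity to pin down the ingredients appearing in those theorems. First I would verify that the counting measure $\pi \equiv 1$ is a positive excessive measure for $(X, \mathbb P)$: spatial homogeneity gives $\sum_x \pi(x) P_t(x, y) = \sum_z P_t(0, z) = \mathbb P_0(t < \zeta) \leq 1 = \pi(y)$. I would then fix $\mfo = 0$ as the reference point.

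The key quantities to identify are $H(x) = s_0^x$, $\widehat H(y) = s_0^{-y}$, $\con = -1/\psi'(s_0)$, and $\Hqb(x) = -\psi'(s_0) s_0^b \scale(b-x-1)$. For $H$, I would invoke the characterization of Theorem \ref{thm_fundexc}\ref{item_charac_H}: the candidate $s_0^x$ is nonincreasing with value $1$ at $0$; evaluating \eqref{GF_CPP} at $s = s_0$ and using the Markov property shows that $(s_0^{X_t}\1_{\{t<\zeta\}})_{t \geq 0}$ is a martingale, bounded on the path segment $\{X_{u \wedge T_a} \geq a\}$, so optional stopping yields harmonicity for the process stopped at $T_a$; property (iv) is checked by noting that its antecedent forces $p=0$ and $m>1$, in which case $s_0 < 1$ and $s_0^x \to 0$. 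Once $H$ is pinned down, the identity \eqref{fpt_above_general}, the strong Markov property, and spatial homogeneity give $G(x, y) = \con s_0^{x-y}$ for $x \geq y$ (writing $G(y,y) = G(0,0) = \con$), whence $\widehat H(y) = \lim_{x \to \infty} G(x, y) / G(x, 0) = s_0^{-y}$ (using $\widehat G(y, x) = G(x, y)$).

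To compute $\con = G(0, 0)$ I would invert the generating function. Integrating \eqref{GF_CPP} in $t$ gives $\sum_y s^y G(0, y) = -s/\psi(s)$ in the annulus $s_0 < |s| < 1$, so extracting the $s^0$ coefficient by Cauchy's formula on $|s| = r$ with $r \in (s_0, 1)$ picks up the residue of $-1/\psi(s)$ at the simple pole $s_0$, giving $\con = -1/\psi'(s_0)$. A parallel Laurent extraction produces $G(x, y) = -s_0^{x-y}/\psi'(s_0) - \scale(y-x-1)$, which is Part \ref{item_CPP_resolv}. The formula for $\Hqb$ then follows from \eqref{killedresolvent}: for $y$ sufficiently negative, $G^{[b}(x, y) = \scale(b-x-1) s_0^{b-y}$, so dividing by $G(0, y) \sim -s_0^{-y}/\psi'(s_0)$ and letting $y \to -\infty$ gives the claimed expression.

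Part \ref{item_CPP_htp} is then an immediate simplification of \eqref{fht_expr}. For Part \ref{item_CPP_fluc} I would apply Theorem \ref{thm_fluc}\ref{item_twosidedexit}, \ref{item_dynkin}, \ref{item_fpt_downward}, using \eqref{2sidedkilledresol_expr} and \eqref{1sidedkilledresol_expr} to express the killed resolvent densities purely in terms of $\scale$, namely $G^{(a,b)^\complement}(x,y) = \scale(b-x-1)\scale(y-a-1)/\scale(b-a-1) - \scale(y-x-1)$ and similarly for $G^{[b}$, and then summing over $z$ in the ranges where $\scale(\cdot)$ is nonzero. The two cases in the downward passage formula arise from distinguishing $s_0 < 1$ (where $\scale(k) \sim -1/(\psi'(s_0) s_0^{k+1}) \to \infty$, so $\Hqb(-\infty) = \infty$ eliminates the correction in \eqref{fpt_downward}) from $s_0 = 1$ (where $\scale(k) \to -1/\psi'(1)$ gives $\Hqb(-\infty) = 1$ and produces the residual $\psi'(1)\scale(b-x-1)$). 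The ``in any case'' qualifier in Parts \ref{item_CPP_htp} and \ref{item_CPP_fluc} extends the identities to the recurrent case $p=0,m=1$ via the $q$-killing device from the introduction: apply the transient-case identities to $(X, \mathbb P^{(q)})$ for $q > 0$ and let $q \downarrow 0$, using continuity of $s_0^{(q)}$, $\psi^{(q)\prime}(s_0^{(q)})$, and $\scale^{(q)}$ in $q$. I expect the main obstacle to be the generating-function/Laurent analysis underpinning $\con$ and the asymptotics of $G$ and $\scale$, in particular justifying the contour manipulations when $\mu^*$ may only be analytic on $|s|<1$.
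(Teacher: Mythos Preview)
Your overall strategy coincides with the paper's: fix $\pi\equiv 1$ and $\mfo=0$, identify $H(x)=s_0^x$ via the characterization in Theorem~\ref{thm_fundexc}\ref{item_charac_H}, obtain $\widehat H(y)=s_0^{-y}$ from duality and spatial homogeneity, determine $\con$ and $\Hqb$, read off the fluctuation identities from Theorem~\ref{thm_fluc}, and extend to the recurrent case by the $q$-killing limit.

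The one substantive divergence is in how you pin down $\con=G(0,0)$. Your contour argument (``Cauchy's formula on $|s|=r$ picks up the residue of $-1/\psi(s)$ at $s_0$'') tacitly assumes $s_0$ is the \emph{only} zero of $\psi$ inside the disc $|s|\le r$, which is not clear for a general offspring distribution $\mu$---and is precisely the obstacle you flag at the end. The paper sidesteps complex analysis entirely: knowing $G(0,y)=\con s_0^{-y}$ for $y\le 0$, one writes
\[
\sum_{y\ge 1}s^yG(0,y)=-\frac{s}{\psi(s)}-\frac{\con}{1-s_0/s}, \qquad s\in(s_0,1],
\]
observes that the left-hand side remains finite as $s\downarrow s_0$ (since $G(0,y)\le 1/p$ when $p>0$), and applies L'H\^opital to the right-hand side to force $\con=-1/\psi'(s_0)$. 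This is the removable-singularity computation underlying your residue intuition, run on the real axis. Analytic continuation then extends the identity to $s\in[0,s_0)$, from which $\Hqy(0)$ and hence $G(x,y)$ are read off via $\scale^*[s]=1/\psi(s)$. Likewise, for the large-$k$ behaviour of $\scale(k)$ needed in the last identity of part~\ref{item_CPP_fluc}, the paper uses the Abelian relation $\lim_{k\to\infty}\scale(k)=\lim_{s\uparrow 1}\Delta\scale^*[s]=\lim_{s\uparrow 1}(1-s)/\psi(s)$ rather than singularity-analysis asymptotics, again avoiding any question about the complex zeros of $\psi$.
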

\begin{proof}
We first assume $p>0$. Then all states are transient and so we can use the results from Section \ref{sec_pot_fluc}. We choose for the reference point $\mfo=0$ and we choose for the reference measure $\pi\equiv 1$. Note that by spatial homogeneity $\sum_{y\in\mathbb Z} P_t(x,y) = \sum_{y\in\mathbb Z} P_t(0,y-x)\leq 1$ for any $x\in\mathbb Z$ and so $\pi\equiv 1$ is excessive for $(X,\mathbb P)$. We have by \eqref{GF_CPP}, for any $x\in\mathbb Z$,
\begin{equation*}
\sum_{y\in E} P_t(x,y)s_0^y = s_0^x \exp \left( \frac{\psi(s_0)}s t \right) =  s_0^x.
\end{equation*}
So $x\mapsto s_0^x$ is harmonic for $(X,\mathbb P)$ and so, via Remark \eqref{remark_harm} and since $s_0\in(0,1)$ as $p>0$, satisfies properties (i)-(iv) in Theorem \ref{thm_fundexc}\ref{item_charac_H}. Hence $H(x)=s_0^x$.
Further, as $\widehat G(y,x)=G(x,y)=G(-y,-x)$ for all $y\in\mathbb Z$, we have by the definitions of $H$ and $\widehat H$ in Theorem \ref{thm_fundexc}, for all $y\in\mathbb Z$,
\begin{equation*}
\widehat H(y) = \lim_{x\uparrow\infty} \frac{\widehat G(y,x)}{\widehat G(0,x)} =  \lim_{x\uparrow\infty} \frac{G(-y,-x)}{G(0,-x)} = \lim_{z\downarrow -\infty} \frac{G(-y,z)}{G(0,z)} = H(-y) = s_0^{-y}.
\end{equation*}
Since  $\psi(s)<0$ for $s\in(s_0,1]$ we have by \eqref{GF_CPP}, for $s\in(s_0,1]$,
\begin{equation*}
G(0,\cdot)^*[s] = \int_0^\infty  \exp \left( \frac{\psi(s)}s t \right) d t = -\frac s{\psi(s)}.
\end{equation*}
By \eqref{resolv_expr} and recalling $\Hqy(0)=0$ for $y\leq 0$, we have for $s\in(s_0,1]$,
\begin{equation}\label{GF_resol_CPP_pospart}
\begin{split}
\sum_{y\geq 1}s^y G(0,y) = G(0,\cdot)^*[s] -  \sum_{y\leq 0} \con(s/s_0)^y
=    -\frac s{\psi(s)} - \frac{\con }{1-s_0/s}
= & -\frac{(s-s_0)s + \con\psi(s)s}{\psi(s)(s-s_0)}.
\end{split}
\end{equation}
Since $G(0,y)\leq 1/p$ for all $y\in\mathbb Z$, we must have $\sum_{y\geq 1}s^y G(0,y)<\infty$ for all $s\in[0,1]$. Hence by continuity of a power series
\begin{equation*}
-\lim_{s\downarrow s_0}\frac{(s-s_0)s + \con \psi(s)s}{\psi(s)(s-s_0)} = \lim_{s\downarrow s_0}\sum_{y\geq 1}s^y G(0,y) \in [0,\infty).
\end{equation*}
But by L'H\^opital, if $\con\neq -\frac1{\psi'(s_0)}$, then
\begin{equation*}
\left| \lim_{s\downarrow s_0}\frac{(s-s_0)s + \con\psi(s)s}{\psi(s)(s-s_0)} \right| =
\left| \lim_{s\downarrow s_0} \frac{s-s_0 + s(1+\con\psi'(s)) + \con\psi(s)}{\psi'(s)(s-s_0) + \psi(s)} \right| = \infty.
\end{equation*}
So we must have $\con= -\frac1{\psi'(s_0)}$. Then by analytic continuation, \eqref{GF_resol_CPP_pospart} holds for all $s\in[0,1]$. Therefore, since $G(0,y) = \con s_0^{-y}(1 - \Hqy(0))$ for $y\in\mathbb Z$ by \eqref{resolv_expr}, we have for  $s\in[0,s_0)$,
\begin{equation*}
\sum_{y\geq 1} (s/s_0)^y \Hqy(0) = - \frac1\con \sum_{y\geq 1}s^y G(0,y) + \frac{s/s_0}{1-s/s_0} = -\psi'(s_0)\frac{s}{\psi(s)}.
\end{equation*}
So $\Hqy(0) = -\psi'(s_0)s_0^{y} \scale(y-1)$ (recall $\scale^*[s]=\frac1{\psi(s)}$ for $s\in[0,s_0)$) and so $G(0,y)=\con s_0^{-y}(1 - \Hqy(0))=-\frac{s_0^{-y}}{\psi'(s_0)} - \scale(y-1)$. By spatial homogeneity $G(x,y)=G(0,y-x)$ which proves item \ref{item_CPP_resolv} for the case $p>0$ and further implies by \eqref{resolv_expr}, for $x,y\in\mathbb Z$,
\begin{equation*}
\Hqy(x)=-\psi'(s_0)s_0^y \scale(y-x-1).
\end{equation*}

If $p=0$, we consider for $q>0$ the process $(X,\mathbb P^{(q)})$ which is defined to be the process $(X,\mathbb P)$ but killed at rate $q$, i.e. its Q-matrix is given by \eqref{Qmat_CPP} but with $p=q$. Then clearly from \eqref{GF_CPP}, the transition kernel of $(X,\mathbb P^{(q)})$ is given by $P^{(q)}_t(x,y)=e^{-q t} P_t(x,y)$.
By the same reasoning as for \eqref{killed_process_stoptime}, we further have for any stopping time $T$,
\begin{equation}\label{feynmankac_stoptime}
\mathbb P_x^{(q)}(\Gamma\cap\{T<\infty\})=\mathbb E_x \left[ e^{-q T} \1_{\Gamma\cap\{T<\infty\}}   \right] , \quad \Gamma\in\mathcal F_T, \ x\in E.
\end{equation}
Since all states are transient for $(X,\mathbb P^{(q)})$ we have with the obvious notation,
\begin{equation*}
G^{(q)}(x,y) = \int_0^\infty e^{-q t}P_t(x,y)d y=   -\frac{s_0(q)^{x-y}}{\psi'(s_0(q))-q} - \scale^{(q)}(y-x-1), \quad x,y\in\mathbb Z.
\end{equation*}
By the monotone convergence theorem $G^{(q)}(x,y)\to G(x,y)$ as $q\downarrow 0$ and it is easy to see by convexity of $\psi$ that $s_0(q)\to s_0$ and thus also $\psi'(s_0(q))\to\psi'(s_0)$ as $q\downarrow 0$. Further, via \eqref{def_scale_CPP} it is easily seen that  $\scale^{(q)}(x)\to \scale(x)$ as $q\downarrow 0$ for any $x\in\mathbb Z$. Since $\psi'(s_0)\neq 0$ when $p=0$ and $m\neq 1$, we can conclude that item \ref{item_CPP_resolv} also holds when $m\neq 1$. Note that the above expressions found for $H(x)$, $\widehat H(y)$ (given the same reference measure $\pi\equiv 1$) and $\Hqy(x)$ still hold for the case $p=0$ and $m\neq 1$ (for which we know all states are transient as well). Then the identities in items \ref{item_CPP_htp} and \ref{item_CPP_fluc} in the case $p>0$ or $m\neq 1$ immediately follow from Theorem \ref{thm_fluc} in combination with, relevant for the last identity in item \ref{item_CPP_fluc},
\begin{equation*}
\lim_{x\to\infty} \scale(x) =
\begin{cases}
\infty & \text{if $p>0$ or $m>1$}, \\
-\frac1{ \psi'(1)}  & \text{if $p=0$ and $m< 1$}.
\end{cases}
\end{equation*}
In order to see this, note that $\lim_{x\to\infty} \scale(x) = \lim_{s\uparrow 1} \Delta \scale^*[s]$ where $\Delta W(x)=W(x)-W(x-1)$, $x\in\mathbb Z$, and that (i) $s_0<1$ if $p>0$ or $m>1$ and so $\lim_{s\uparrow 1} \Delta \scale^*[s]\geq \lim_{s\uparrow s_0} \Delta \scale^*[s] = \lim_{s\uparrow s_0}\frac{1-s}{\psi(s)} =\infty$ and (ii) $s_0=1$ and $\psi(1)=0$ if $p=0$ and $m<1$ and so $\lim_{s\uparrow 1} \Delta \scale^*[s] = \lim_{s\uparrow 1} \frac{1-s}{\psi(s)} = -\frac1{\psi'(1)}$.
If on the other hand, $p=0$ and $m=1$, then $s_0=1$, $\psi'(1)=0$ and all states are recurrent. Hence item \ref{item_CPP_htp} and the last identity in item \ref{item_CPP_fluc} still hold when $p=0$ and $m=1$. The other two identities in item \ref{item_CPP_fluc}, in the case where $p=0$ and $m=1$, can be proved by considering the process $(X,\mathbb P^{(q)})$ for $q>0$ and taking limits as $q\downarrow 0$ while using \eqref{feynmankac_stoptime}.
\end{proof}

\section{Markov Branching Processes with Immigration}\label{sec_MBI}
In this section, we apply the methodology developed in Section \ref{sec_pot_fluc} to establish the fluctuation theory of Markov branching processes with immigration, which are downward skip-free continuous-time Markov chains. We emphasize that although this class of processes have been intensively studied, only the law of the downward first passage time, that is the one with continuous crossing, has been characterized through its Laplace transform, see \cite{paper1,Vidmar} and in the continuous state space setting \cite{DFM}. Further remarks about the literature can be found near the end of the section.

We assume that $E=\mathbb Z\cap[0,\infty)$ is the set of nonnegative integers and the Q-matrix generating $(X,\mathbb P)$ is given by, for $x,y\in E$,
\begin{equation}\label{QmatMBI}
Q(x,y)=
\begin{cases}
\mu(y-x+1) \alpha x + \nu(y-x) \beta    & \text{if $y\geq x+1$}, \\
-(\alpha+p) x -(\beta+q) & \text{if $y=x$}, \\
\mu(0) \alpha x   & \text{if $y=x-1$}, \\
0 & \text{if $y\leq x-2$},
\end{cases}
\end{equation}
where $\alpha>0$, $\mu=(\mu(j))_{j\geq 0}$ is a probability measure with $\mu(0)>0$ and $\mu(1)=0$, $p\geq 0$, $\beta\geq 0$, $\nu=(\nu(j))_{j\geq 1}$ is a probability measure and  $q\geq 0$. Note that $Q(x,\partial)=p x + q$ for $x\in E$.
We call $(X,\mathbb P)$ a Markov branching process with immigration (MBI) with parameters $(\alpha,\mu,p,\beta,\nu,q)$. If $\beta=q=0$ then we call the MBI a Markov branching process (MBP) with parameters $(\alpha,\beta,p)$.
MBIs are skip-free downward (note $\alpha\mu(0)>0$) continuous-time Markov chains with transition rates that are affine in the state variable and were introduced by Sevast'yanov \cite{sevastyanov}. For more background information on MBIs/MBPs we refer to Section 1 of Li et al. \cite{li_chen_pakes_2012} and the references therein or Chapter V of Harris \cite{harris} or Chapter III of Athreya and Ney \cite{Ney-Bran}.
For $f:E\to\mathbb R$ we denote its generating function (GF) by $f^*[s]:=\sum_{y\in E}s^y f(y)$ and for $f,g:E\to\mathbb R$ we denote by $f\star g:E\to\mathbb R$ the convolution of $f$ and $g$, i.e. $(f\star g)(x)=\sum_{y=0}^xf(x-y)g(y)$. The {branching} and {immigration mechanisms} $\psi$ and $\phi$ of an MBI are defined by, for $s\in[0,1]$,
\begin{equation*}%
\begin{split}
\psi(s) = & \alpha \left( \mu^*[s] -s \right) - p s,  \\
\phi(s) = & \beta \left( 1-\nu^*[s] \right) +q ,
\end{split}
\end{equation*}
where we have set $\nu(0)=0$.
Further, we define
\begin{equation*}
s_0 =\inf\{s>0:\psi(s)=0\}.
\end{equation*}	
Since $\psi$ is convex,  $\psi(0)=\alpha\mu_0>0$ and $\psi(1)=-p\leq 0$, we have $s_0 \in (0,1]$ and $\psi(s)>0$ for $s\in[0,s_0)$. From \eqref{QmatMBI},
\begin{equation}\label{conn_Qmat_mech}
 Q(x,\cdot)^*[s] = \psi(s)x s^{x-1} - \phi(s) s^x, \quad x\geq 0, \ s\in[0,1],
\end{equation}
where we understand that $x s^{x-1}=0$ when $x=0$ and $s=0$.
Before we derive the fluctuation identities for the MBI using the theory from Section \ref{sec_pot_fluc}, we need a few preparatory lemmas.
\begin{lemma}\label{lem_recursions}
	Let $\overline\mu=(\overline\mu(k))_{k\geq 0}$ and $\overline\nu=(\overline\nu(k))_{k\geq 0}$ be the tail measures of $\mu$ and $\nu$, i.e. for $k\geq 0$,
	\begin{equation*}
	\overline\mu(k)= 1-\sum_{0\leq j\leq k}\mu(j), \quad  \overline\nu(k)=1-\sum_{0\leq j\leq k}\nu(j),
	\end{equation*}
	where we have set $\nu(0):=0$. Define the function $\Delta\scale:E\to[0,\infty)$ recursively by $\Delta \scale(0)=\frac1{\alpha\mu(0)}$ and for $k\geq 0$,
	\begin{equation*}
	\Delta \scale(k+1) = \frac1{\mu(0)}\sum_{j=0}^k \Delta \scale(j) \left( \overline \mu(k-j+1) + \frac p \alpha \right).
	\end{equation*}
	Define the functions $\scale,\kappa:E\to[0,\infty)$ by
	\begin{equation*}
	\scale(k)=\sum_{j=0}^k\Delta \scale(j) \quad \text{and} \quad \kappa(k)= \beta(\Delta \scale\star \overline\nu)(k) + q \scale(k), \quad k\geq 0.
	\end{equation*}
	Define the nonnegative measure $\pi=(\pi(k))_{k\geq 0}$ and the signed measure $\varpi=(\varpi(k))_{k\geq 0}$ recursively by $\pi(0)=\varpi(0)=1$ and for $k\geq 0$,
	\begin{equation*}
	\pi(k+1) = \frac1{k+1}(\pi\star \kappa)(k)  \quad \text{and} \quad \varpi(k+1) = -\frac1{k+1} (\varpi \star\kappa)(k).
	\end{equation*}
Then the GFs of $\Delta\scale$, $\scale$, $\kappa$, $\pi$ and $\varpi$ are given by, for $s\in[0,s_0)$,
\begin{align*}
\Delta\scale^*[s] = & \frac{1-s}{\psi(s)}
&\scale^*[s]= & \frac1{\psi(s)} &
\kappa^*[s] = & \frac{\phi(s)}{\psi(s)}   \\
\pi^*[s] = & \exp \left( \int_0^s \frac{\phi(u)}{\psi(u)} d u \right)
& \varpi^*[s] = &  \exp \left(- \int_0^s \frac{\phi(u)}{\psi(u)} d u \right).
\end{align*}	
\end{lemma}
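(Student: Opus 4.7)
The plan is to translate each of the five defining recursions into an equivalent identity---algebraic for $\Delta\scale, \scale, \kappa$ and differential for $\pi, \varpi$---for the corresponding generating function, and then solve. All manipulations take place initially at the level of formal power series; convergence on $[0,s_0)$ follows a posteriori from the analyticity of the claimed closed forms on that interval, where $\psi(s)>0$.

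First I would handle $\Delta\scale$. The recursion rewrites as $\mu(0)\Delta\scale(k+1) = (\Delta\scale \star b)(k)$ with $b(i) := \overline\mu(i+1) + p/\alpha$ for $i \geq 0$. Multiplying by $s^{k+1}$ and summing yields $\mu(0)\Delta\scale^*[s] - 1/\alpha = s\cdot\Delta\scale^*[s]\cdot b^*[s]$. The main obstacle is to evaluate $s\cdot b^*[s]$ in usable closed form; here I would invoke the standard tail identity $\overline\mu^*[s] = (1-\mu^*[s])/(1-s)$, which gives $\sum_{i\geq 0}s^{i+1}\overline\mu(i+1) = \overline\mu^*[s] - \overline\mu(0) = (1-\mu^*[s])/(1-s) - (1-\mu(0))$. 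After substitution, the coefficient of $\Delta\scale^*[s]$ collapses to $\psi(s)/(\alpha(1-s))$, yielding $\Delta\scale^*[s] = (1-s)/\psi(s)$. The formula $\scale^*[s] = 1/\psi(s)$ is then immediate because $\scale = \Delta\scale \star \mathbf 1$ and the GF of the constant sequence $\mathbf 1$ is $1/(1-s)$; similarly, applying the convolution theorem to $\kappa = \beta(\Delta\scale \star \overline\nu) + q\scale$ together with the analogous tail identity $\overline\nu^*[s] = (1-\nu^*[s])/(1-s)$ gives $\kappa^*[s] = \phi(s)/\psi(s)$.

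Finally, for $\pi$ I note that the recursion $(k+1)\pi(k+1) = (\pi \star \kappa)(k)$ becomes, upon multiplication by $s^k$ and summation, the first-order linear ODE $(\pi^*)'(s) = \pi^*[s]\,\kappa^*[s]$ with initial condition $\pi^*[0]=1$, since $\sum_{k\geq 0}(k+1)\pi(k+1)s^k$ is exactly $\frac{d}{ds}\pi^*[s]$ and $\sum_{k\geq 0}(\pi\star\kappa)(k)s^k = \pi^*[s]\kappa^*[s]$. Substituting $\kappa^*[s] = \phi(s)/\psi(s)$ and solving by separation of variables yields $\pi^*[s] = \exp\bigl(\int_0^s \phi(u)/\psi(u)\,du\bigr)$. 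The derivation for $\varpi$ is identical up to the sign in the recursion, which flips the sign in the exponent. Overall this is essentially an exercise in generating-function bookkeeping; the single genuinely nontrivial step is the algebraic simplification in Step~1 via the tail-measure identity $\overline\mu^*[s] = (1-\mu^*[s])/(1-s)$.
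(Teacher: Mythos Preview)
Your proposal is correct and follows essentially the same route as the paper: translate each recursion into an identity for the generating function (algebraic for $\Delta\scale,\scale,\kappa$, a first-order linear ODE for $\pi,\varpi$), invoke the tail identity $\overline\mu^*[s]=(1-\mu^*[s])/(1-s)$ at the key step, and solve. The only difference is in how convergence on $[0,s_0)$ is justified: the paper does this explicitly via a renewal-equation bound (writing $\Delta\scale = \Delta\scale(0)\sum_n \eta^{\star n}$ and, for $\pi,\varpi$, using $|\varpi(k)|\le\pi(k)\le u(k)$ with $u$ a renewal sequence), whereas your ``formal power series then analyticity'' remark is a little loose as stated---analyticity of the closed form on the real interval $[0,s_0)$ alone does not give convergence of the Taylor series there---but becomes rigorous once you note that $\Delta\scale,\scale,\kappa,\pi$ have nonnegative coefficients (so Pringsheim locates the radius of convergence at $s_0$) and that $\varpi^*$ is then analytic on the disk $|s|<s_0$ because $\kappa^*$ is.
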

\begin{proof}
	Let $f:E\to [0,\infty)$ with $f(0)=0$ be given and let $f^{\star n}$ denote the $n$-fold convolution of $f$ with $f^{\star0}(x)=\delta_{0x}$. It is well-known that, given $u_0\geq 0$, $u(x)=u_0\sum_{n\geq 0} f^{\star n}(x)$, is well-defined and satisfies the renewal sequence
	\begin{equation}\label{renewalseq}
	u(x+1)=\sum_{j=0}^x u(j)f(x+1-j), \quad x\geq 0, \quad u(0)=u_0,
	\end{equation}
	see e.g. Theorem V.2.4 in \cite{ASM03} for the arguments. So if $f^*[s]<1$ for some $s>0$, then $u^*[s]=\frac{u_0}{1-f^*[s]}<\infty$. Let $\eta(x)=\frac1{\mu(0)}(\overline \mu(x) + \frac p \alpha)$ for $x\geq 1$ and set $\eta(0)=0$. Since, for $s\in[0,1)$, $\overline\mu^*[s]=\frac{1-\mu^*[s]}{1-s}$ we have
	\begin{equation*}
	\eta^*[s] = \frac1{\mu(0)} \left( \frac{1-\mu^*[s]}{1-s} - (1-\mu(0)) + \frac p{\alpha(1-s)} \right)= 1 -\frac{\psi(s)}{\alpha\mu(0)(1-s)}.
	\end{equation*}
	So $\eta^*[s]<1$ for $s\in[0,s_0)$ and so we conclude that for $s\in[0,s_0)$, $\Delta\scale^*[s] = \frac{\Delta\scale(0)}{1-\eta^*[s]}=\frac{1-s}{\psi(s)}$.
	Consequently, $\scale^*[s]= \frac{\Delta\scale^*[s]}{1-s}= \frac1{\psi(s)}$  for $s\in[0,s_0)$. Since $\overline\nu^*[s]=\frac{1-\nu^*[s]}{1-s}$, we have
	$\kappa^*[s]= \beta \left( \frac{1-s}{\psi(s)} \frac{1-\nu^*[s]}{1-s} \right) + \frac q{\psi(s)}=\frac{\phi(s)}{\psi(s)}$ for $s\in[0,s_0)$.
	From the definitions of $\pi$ and $\varpi$ it follows that $|\varpi(x)|\leq \pi(x)\leq u(x)$ where $u$ is given by the renewal sequence \eqref{renewalseq} with $f(j)=\kappa(j-1)$, $j\geq 1$. Since $f^*[s]=\kappa^*[s]s$ there exists $\epsilon>0$ such that $f^*[s]<1$  and consequently $u^*[s]<\infty$ for $s\in[0,\epsilon]$. Hence  $\pi^*[s]$ and $\varpi^*[s]$ are well-defined for $s\in[0,\epsilon]$.
	By the recursive definitions of $\pi$ and $\varpi$,
	$\frac{d}{d s}\pi^*[s] = \pi^*[s]\frac{\phi(s)}{\psi(s)}$ with $\pi^*[0]=1$ and $\frac{d}{d s}\varpi^*[s] = -\varpi^*[s]\frac{\phi(s)}{\psi(s)}$ with $\varpi^*[0]=1$. By solving these ODEs we get $\pi^*[s]= \exp \left( \int_0^s \frac{\phi(u)}{\psi(u)}d u \right) =\frac1{\varpi^*[s]}$ for $s\in[0,\epsilon]$. Since the middle part of the last equation is well-defined for all $s\in[0,s_0)$ it follows by properties of power series that this last equation holds for all $s\in[0,s_0)$.
\end{proof}

The next two lemmas are more of less covered in Section 2 of Li et al. \cite{li_chen_pakes_2012}, though we provide a  proof since \cite{li_chen_pakes_2012} has a different starting definition of MBIs and uses different notation.
\begin{lemma}\label{lem_MBI_MGF}
Define $\Lambda:[0,s_0)\to[0,\infty)$ by $\Lambda(s)=\int_0^s \frac1{\psi(u)}\mathrm d u$. The function $\Lambda$ is bijective and so the inverse $\Lambda^{-1}:[0,\infty)\to[0,s_0)$ is well-defined.
Then, for $x\in E$, $t\geq 0$ and $s\in[0,s_0)$, the GF of the MBI $(X,\mathbb P)$ is given by,
\begin{equation}\label{GF_MBI}
\mathbb E_x \left[ s^{X_t}\1_{\{t<\zeta\}} \right] = \Psi_t(s)^x \Phi_t(s), \quad
\end{equation}
where
\begin{align}
\Psi_t(s) = & \Lambda^{-1}(t+\Lambda(s)) \label{Psi} \\
\Phi_t(s) = & {\pi^*[s] }{ \varpi^*\left[\Psi_t(s)\right] } \label{Phi}.
\end{align}
\end{lemma}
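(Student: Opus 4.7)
The plan is to derive a first-order linear PDE for the generating function $F_t(x,s):=\mathbb E_x[s^{X_t}\1_{\{t<\zeta\}}]$ from the forward Kolmogorov equations and solve it by the method of characteristics using a multiplicative ansatz. Multiplying $\partial_t P_t(x,y)=\sum_{z\in E}P_t(x,z)Q(z,y)$ by $s^y$, summing in $y$, and invoking the identity \eqref{conn_Qmat_mech} (with the sum interchange justified on $s\in[0,1)$ by the linear-in-state off-diagonal row sums of $Q$ together with the finiteness of $\mathbb E_x[X_t]$, which is easily pre-established by applying the forward equations to the test function $y\mapsto y$) leads to
\begin{equation*}
\partial_t F_t(x,s)=\psi(s)\,\partial_s F_t(x,s)-\phi(s)\,F_t(x,s),\qquad F_0(x,s)=s^x.
\end{equation*}

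Next, I would try the multiplicative ansatz $F_t(x,s)=\Psi_t(s)^x\Phi_t(s)$ (which mirrors the branching-plus-independent-immigration decomposition of an MBI). Substituting and setting $x=0$ extracts the PDE for $\Phi_t$, and cancelling this contribution forces $\partial_t\Psi_t(s)=\psi(s)\partial_s\Psi_t(s)$ with $\Psi_0(s)=s$, while $\Phi_t$ satisfies the full inhomogeneous equation with $\Phi_0\equiv 1$. Both equations share the characteristic ODE $s'(\tau)=-\psi(s(\tau))$, along which $\tau+\Lambda(s(\tau))$ is conserved. Since $\Psi$ is constant along characteristics and matches $\Psi_0(s)=s$, tracing the characteristic through $(t,s)$ back to time $0$ delivers $\Psi_t(s)=\Lambda^{-1}(t+\Lambda(s))$. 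Making this rigorous requires bijectivity of $\Lambda:[0,s_0)\to[0,\infty)$: strict monotonicity is immediate from $1/\psi>0$, and $\Lambda(s)\to\infty$ as $s\uparrow s_0$ follows because convexity of $\psi$ combined with $\mu(0)>0$ and $\mu(1)=0$ forces $\psi''(s_0)>0$, so $\psi$ has at $s_0$ a zero of order at most two, producing a non-integrable singularity of $1/\psi$ there.

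Finally, along the same characteristic one has $d\Phi_t(s(\tau))/d\tau=-\phi(s(\tau))\Phi_t(s(\tau))$, so integrating from $0$ to $t$ with $\Phi_0=1$ and performing the change of variable $d\tau=-du/\psi(u)$ (with endpoints $s(0)=\Psi_t(s)$ and $s(t)=s$) yields
\begin{equation*}
\Phi_t(s)=\exp\!\left(-\int_s^{\Psi_t(s)}\frac{\phi(u)}{\psi(u)}\,du\right)=\exp\!\left(\int_0^s\frac{\phi(u)}{\psi(u)}\,du\right)\exp\!\left(-\int_0^{\Psi_t(s)}\frac{\phi(u)}{\psi(u)}\,du\right),
\end{equation*}
which is precisely $\pi^*[s]\,\varpi^*[\Psi_t(s)]$ by the explicit formulas from Lemma \ref{lem_recursions}. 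The main obstacle I anticipate is the bookkeeping for the sum-interchange justification and the dominated-convergence/analyticity arguments needed to extend the PDE calculation to all $s\in[0,s_0)$; the characteristic computation itself is routine. As a sanity check, when $\beta=q=0$ one has $\phi\equiv 0$, hence $\Phi_t\equiv 1$, and \eqref{GF_MBI} collapses to the classical branching-property identity $\mathbb E_x[s^{X_t}\1_{\{t<\zeta\}}]=\mathbb E_1[s^{X_t}\1_{\{t<\zeta\}}]^x$ for a pure MBP.
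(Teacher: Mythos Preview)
Your approach is correct but takes a different route from the paper's. The paper first establishes the multiplicative factorisation probabilistically: it shows that $\widetilde P_t(x,\cdot):=P_t(0,\cdot)\star P_t^\circ(1,\cdot)^{\star x}$ (where $P_t^\circ$ is the semigroup of the pure MBP with parameters $(\alpha,\mu,p)$) solves the forward equations for $Q$, and then invokes a uniqueness result for those equations to conclude $P_t=\widetilde P_t$. This yields the product form $F_t(x,s)=\Psi_t(s)^x\Phi_t(s)$ a priori, with the probabilistic identifications $\Psi_t(s)=P_t^\circ(1,\cdot)^*[s]$ and $\Phi_t(s)=P_t(0,\cdot)^*[s]$. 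Only afterwards does the paper apply the \emph{backward} equations at $x=1$ (for the MBP) and $x=0$ (for the MBI) to obtain the pair of time-ODEs $\partial_t\Psi_t(s)=\psi(\Psi_t(s))$ and $\partial_t\Phi_t(s)=-\phi(\Psi_t(s))\Phi_t(s)$, which it then integrates. By contrast, you go straight to a PDE in $(t,s)$ from the forward equations and solve by characteristics, with the multiplicative form entering only as an ansatz whose validity is secured by uniqueness for the first-order linear PDE. Your route is more self-contained analytically (it avoids citing the forward-equation uniqueness theorem and the MBP branching property from outside sources), whereas the paper's route gives the extra structural information that $\Psi_t$ and $\Phi_t$ are themselves sub-probability generating functions, and its time-ODEs are slightly cleaner to handle than the PDE.

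Two small points worth tightening. First, the interchange $\partial_t\sum_y s^yP_t(x,y)=\sum_y s^y\partial_t P_t(x,y)$ and the identification $\sum_z zs^{z-1}P_t(x,z)=\partial_sF_t(x,s)$ do not actually need a first-moment bound: since $\sum_y P_t(x,y)\le 1$, the power series $F_t(x,\cdot)$ has radius of convergence at least $1$, so termwise differentiation in $s$ is automatic on $[0,1)$, and the $t$-interchange follows from the affine-in-$z$ growth of $|Q(z,\cdot)|^*[s]$ combined with this same fact. Second, your argument that $\psi''(s_0)>0$ fails when $\mu=\delta_0$ (then $\psi$ is affine); but the divergence $\Lambda(s)\to\infty$ follows more simply from convexity alone: for $u\in[s,s_0]$ the graph of $\psi$ lies below the secant through $(s,\psi(s))$ and $(s_0,0)$, so $1/\psi(u)\ge (s_0-s)/\big(\psi(s)(s_0-u)\big)$, whose integral over $[s,s_0)$ diverges.
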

\begin{proof}
	Note that $\Lambda$ is bijective since it is increasing, continuous, $\Lambda(0)=0$ and, as $\psi$ is a power series with $\psi(s_0)=0$, $\lim_{s\uparrow s_0}\Lambda(s)=\infty$.
	The first part of the proof of Theorem 2.2 in \cite{li_chen_pakes_2012} in combination with p.83 in \cite{anderson} shows that, for $x,y\in E$ and $t\geq 0$, $P_t(x,y)$ is the unique nonnnegative solution to the forward equations  satisfying $\sum_{y\in E}P_t(x,y)\leq 1$. Let $\widetilde P_t(x,y)= \left( P_t(0,\cdot)\star P_t^{\circ}(x,\cdot) \right) (y)$ where $P_t^{\circ}(x,y)$ is the transition kernel associated with the MBP with parameters $(\alpha,\mu,p)$ whose Q-matrix we denote by $Q^{\circ}$. Clearly, $\widetilde P_t(x,y)\geq 0$ and $\sum_{y\in E}\widetilde P_t(x,y)\leq 1$. Since $P_t$, respectively $P^{\circ}_t$, satisfies the forward equations associated with $Q$, respectively $Q^{\circ}$ (see e.g. Section 2.8 in \cite{norris}), we have
	\begin{equation*}
	\begin{split}
	\frac{d}{d t} \widetilde P_t(x,y) = & \sum_{l=0}^{y+1} \sum_{k=0}^{y+1} P_t(0,l)P_t^{\circ}(x,k) \left(  Q(l,y-k) +   Q^{\circ}(k,y-l) \right) \\
	= & \sum_{l=0}^{y+1} \sum_{k=0}^{y+1} P_t(0,l)P_t^{\circ}(x,k)   Q(l+k,y) \\
	= & \sum_{m=0}^{y+1} \sum_{l=0}^m P_t(0,l)P_t^{\circ}(x,m-l) Q(m,y) \\
	= & \sum_{m\in E}\widetilde  P_t(x,m)Q(m,y),
	\end{split}
	\end{equation*}
	where the second equality is due to \eqref{QmatMBI} and where we have set $Q(x,-1)=Q^{\circ}(x,-1)=0$. So $\widetilde P_t$ is a solution to the forward equations and therefore $P_t=\widetilde P_t$. From Theorem 3.1(2) in Anderson or Theorem V.4.1 in Harris \cite{harris}\footnote{In these references it is assumed that $p=0$ but the arguments go through for $p>0$.}, we get that $P_t^{\circ}$ satisfies the branching property $P_t^{\circ}(x,y) = P_t^{\circ}(1,\cdot)^{\star x}(y)$ where $\star x$ denotes $x$-fold convolution. Combining this with $P_t(x,y)= \left( P_t(0,\cdot)\star P_t^{\circ}(x,\cdot) \right) (y)$, we see that \eqref{GF_MBI} holds with $\Psi_t(s)=P_t^{\circ}(1,\cdot)^*[s]$ and $\Phi_t(s)=P_t(0,\cdot)^*[s]$. Since $P_t$, respectively $P^{\circ}_t$, also satisfies the backward equations associated with $Q$, respectively $Q^{\circ}$, we have with the help of Fubini, for $s\in[0,1]$ and $t\geq 0$,
	\begin{equation*}
	\begin{split}
P_t^{\circ}(1,\cdot)^*[s] = &  s + \int_0^t \sum_{z\in E} Q^{\circ}(1,z)P^{\circ}_u(z,\cdot)^*[s] d u \\  %
 P_t(0,\cdot)^*[s] = &  1+ \int_0^t \sum_{z\in E} Q(0,z)P_u(z,\cdot)^*[s] d u . %
 \end{split}
	\end{equation*}
So by taking derivatives in $t$ and using \eqref{conn_Qmat_mech}, we get, for $s\in[0,1]$ and $t\geq 0$,
\begin{align}
\frac{d}{d t} \Psi_s(t) = & \psi(\Psi_t(s)), & \Psi_s(0)= s, \label{Psi_ode} \\
\frac{d}{d t} \Phi_s(t) = & -\phi(\Psi_t(s))\Phi_s(t), & \Phi_s(0)= 1 . \label{Phi_ode}
\end{align}
Since $\psi$ is locally Lipschitz on $(-1,1)$ and, for any $s\in[0,s_0)$, the right-hand side of \eqref{Psi} solves the ODE \eqref{Psi_ode} for all $t\geq 0$, it follows by basic ODE theory that \eqref{Psi} holds. Further, the linear ODE \eqref{Phi_ode} has a unique solution and via \eqref{Psi_ode} and Lemma \ref{lem_recursions} the right-hand side  of \eqref{Psi} solves \eqref{Phi_ode} for all $t\geq 0$ for any given $s\in[0,s_0)$,  which implies that \eqref{Phi} holds.
\end{proof}

\begin{lemma}\label{lem_stat_MBI}
Tbe nonnegative measure $\pi$ defined in Lemma \ref{lem_recursions} is a stationary measure for $(X,\mathbb P)$.
\end{lemma}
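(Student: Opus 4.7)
The plan is to verify the stationarity identity $\sum_{x\in E}\pi(x)P_t(x,y)=\pi(y)$ for all $y\in E$ and $t\geq 0$ by comparing generating functions in the variable $y$, exploiting the explicit form of the GF of the MBI obtained in Lemma \ref{lem_MBI_MGF} together with the closed-form expressions for $\pi^*$ and $\varpi^*$ derived in Lemma \ref{lem_recursions}.

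First I would fix $t\geq 0$ and $s\in[0,s_0)$. Since everything in sight is nonnegative, Tonelli's theorem lets me interchange the two summations and write
\begin{equation*}
\sum_{y\in E} s^y \sum_{x\in E} \pi(x)P_t(x,y) = \sum_{x\in E} \pi(x) \sum_{y\in E} s^y P_t(x,y) = \sum_{x\in E}\pi(x)\,\mathbb E_x\!\left[s^{X_t}\1_{\{t<\zeta\}}\right].
\end{equation*}
By \eqref{GF_MBI} this last expression equals
\begin{equation*}
\sum_{x\in E}\pi(x)\Psi_t(s)^x\Phi_t(s) = \Phi_t(s)\,\pi^*[\Psi_t(s)].
\end{equation*}

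Now I would insert the explicit formula \eqref{Phi} for $\Phi_t(s)=\pi^*[s]\,\varpi^*[\Psi_t(s)]$. Because Lemma \ref{lem_recursions} gives $\pi^*[u]\,\varpi^*[u]=1$ for every $u\in[0,s_0)$ (both factors being the exponentials of $\pm\int_0^u\phi/\psi$), and because $\Psi_t(s)=\Lambda^{-1}(t+\Lambda(s))\in[0,s_0)$ whenever $s\in[0,s_0)$, I get
\begin{equation*}
\Phi_t(s)\,\pi^*[\Psi_t(s)] = \pi^*[s]\,\varpi^*[\Psi_t(s)]\,\pi^*[\Psi_t(s)] = \pi^*[s].
\end{equation*}
Thus the two power series $\sum_{y}s^y\bigl(\sum_x\pi(x)P_t(x,y)\bigr)$ and $\sum_y s^y\pi(y)$ coincide on $[0,s_0)$; as the former is majorised by $\pi^*[s]<\infty$, both are genuine convergent power series, and matching coefficients of $s^y$ yields the stationarity relation for every $y\in E$.

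There is no serious obstacle: the main point is simply that the identity $\pi^*[s]=\Phi_t(s)\pi^*[\Psi_t(s)]$ is built into the definitions of $\pi$ and $\varpi$ (which were engineered precisely so that $\varpi^*=1/\pi^*$), and this identity, rewritten using \eqref{GF_MBI}, is exactly the generating-function form of stationarity. The only minor care needed is to ensure $\pi^*$ is evaluated inside its domain of convergence, which is guaranteed by the bijectivity of $\Lambda:[0,s_0)\to[0,\infty)$ established in Lemma \ref{lem_MBI_MGF}.
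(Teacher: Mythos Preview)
Your proposal is correct and follows essentially the same approach as the paper: fix $t\geq 0$ and $s\in[0,s_0)$, use Tonelli to swap the sums, apply \eqref{GF_MBI} to obtain $\Phi_t(s)\,\pi^*[\Psi_t(s)]$, and then use \eqref{Phi} together with $\pi^*\varpi^*=1$ and $\Psi_t(s)\in[0,s_0)$ to reduce this to $\pi^*[s]$. The paper's proof is more terse but the logic is identical.
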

\begin{proof}
	We need to prove $\sum_{x\in E}\pi(x)P_t(x,y)= \pi(y)$, for all $y,t\geq 0$ and $t\geq 0$. We do this by showing the GFs in $y$ of both sides are equal. Fix $t\geq 0$ and $s\in[0,s_0)$. We have by respectively Tonelli, \eqref{GF_MBI},  \eqref{Psi} (which implies $\Psi_t(s)\in[0,s_0)$) and \eqref{Phi} (recalling $\pi^*[s]=1/\varpi^*[s]$),
	\begin{equation*}
	\begin{split}
	\sum_{y\geq 0}\sum_{x\in E}\pi(x)P_t(x,y) s^y = & \sum_{x\in E}\pi(x) \sum_{y\geq 0} P_t(x,y) s^y
	=   \sum_{x\in E}\pi(x)  \Psi_t(s)^x \Phi_t(s)
	=  \pi^*[ \Psi_t(s) ] \Phi_t(s)
	= \pi^*[s].
	\end{split}
	\end{equation*}
\end{proof}

Let us mention the following about transience and recurrence for MBIs.
If $\beta=q=0$, i.e.~$(X,\mathbb P)$ is an MBP, then $0$ is an absorbing state and thus recurrent whereas all other states are transient (as $0$ is attainable from any other state). If $(X,\mathbb P)$ is not an MBP, then either $q>0$ in which case all states are transient or $\beta>0$ and $q=0$ in which case the state space is irreducible and so either all states are transient or all states are recurrent. In the latter case results on whether the state space is transient or recurrent can be found in Section 3 in \cite{li_chen_pakes_2012}. So there are three possible cases: either all states are transient, all states are recurrent or $(X,\mathbb P)$ is an MBP. The following theorem provides a full set of fluctuation identities for MBIs.

\begin{theorem}\label{thm_MBI}
	Assume $(X,\mathbb P)$ is an MBI with parameters  $(\alpha,\mu,p,\beta,\nu,q)$. Let for $x,y\geq 0$,
	\begin{equation*}
	\Hnormy(x) = \sum_{l=0}^{y-x-1} \pi(y-x-1-l)\frac{(\scale\star\varpi)(l)}{l+x+1},
	\end{equation*}
where we understand that the sum equals $0$ if $y\leq x$. Then for $x\geq 0$ and $s\in[0,s_0)$,
\begin{equation*}
\sum_{y\geq 0} 	\Hnormy(x)  s^y
= \int_0^{s}  \frac{v^x}{\psi(v)}\exp \left( \int_v^s \frac{\phi(u)}{\psi(u)} d u \right)  d v.
\end{equation*}
If $\beta=q=0$, then $\Hnormy(x)=\frac{\scale(y-x-1)}y$ where we set $\scale(z)=0$ for $z\leq -1$.
We further have the following.
\begin{enumerate}[label=(\arabic*),ref=(\arabic*)]
	\item\label{item_MBI_resolv} If all states of $(X,\mathbb P)$ are transient, then the resolvent of $(X,\mathbb P)$ is  given by, for $x,y\geq 0$,
	\begin{equation*}
	\begin{split}
	G(x,y) %
	= \pi(y) \int_0^{s_0}  \frac{v^x}{\psi(v)} \exp \left(- \int_0^v \frac{\phi(u)}{\psi(u)} d u \right) d v - \Hnormy(x), \\
	\end{split}
	\end{equation*}
	whereas if $(X,\mathbb P)$ is an MBP, then, for $x\geq 0$ and $y\geq 1$,
	\begin{equation*}
	\begin{split}
	G(x,y) %
	=  \frac1y \left( s_0^x \scale(y-1) - \scale(y-x-1) \right).
	\end{split}
	\end{equation*}

	\item\label{item_MBI_htp} If all states are transient, then for $x,y\geq 0$,
	\begin{equation*}
	\mathbb P_x(T_y<\zeta)  %
	= \frac{\pi(y)\int_0^{s_0}  \frac{v^x}{\psi(v)} \exp \left(- \int_0^v \frac{\phi(u)}{\psi(u)} d u \right)  d v -  \Hnormy(x) }{\pi(y) \int_0^{s_0} \frac{ v^y}{\psi(v)} \exp \left(- \int_0^v \frac{\phi(u)}{\psi(u)} d u \right)  d v },
	\end{equation*}
	whereas if $(X,\mathbb P)$ is an MBP, then for $x\geq 0$,
	\begin{equation*}
	\mathbb P_x(T_y<\zeta)  %
	=
	\begin{cases}
	\frac{s_0^x\scale(y-1) -\scale(y-x-1)}{s_0^y\scale(y-1)} & \text{if $y\geq 1$}, \\
	s_0^x  & \text{if $y=0$}.
	\end{cases}
	\end{equation*}
	Clearly,  $\mathbb P_x(T_y<\zeta)=1$ for all $x,y\geq 0$ if all states are recurrent.
		
	\item\label{item_MBI_fluc} In any case, for $0\leq a\leq x\leq b-1$,
	\begin{equation*}
	\begin{split}
	\mathbb P_x(T_{a}<T_{[b}\wedge\zeta) %
	= & \frac{ \Hnormb(x) }{ \Hnormb(a) }, \\
	\mathbb P_x (T_{(a,b)^\complement} <\zeta)
	= &
	1 - \sum_{z=a+1}^{b-1} (pz +q) \left( \frac{\Hnormb(x)}{\Hnormb(a)}\Hnormz(a) - \Hnormz(x) \right), \\
	\mathbb P_x (T_{[b} <\zeta) %
	= &
	\begin{cases}
	1 - \sum_{z=0}^{b-1} (pz +q) \left( \frac{\pi(z)}{\pi(b)}\Hnormb(x) - \Hnormz(x) \right) & \text{if $\beta+q>0$}, \\
	1  +  p\sum_{z=0}^{b-x-2}\scale(z) - \frac{\scale(b-x-1)}{\scale(b-1)} \left( 1 + p \sum_{z=0}^{b-2}\scale(z) \right)   & \text{if $\beta=q=0$}.
	\end{cases}
	\end{split}
	\end{equation*}
\end{enumerate}		
\end{theorem}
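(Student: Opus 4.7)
The plan is to specialise Theorems \ref{thm_fundexc} and \ref{thm_fluc} to the MBI $(X,\mathbb P)$, taking the stationary measure $\pi$ from Lemma \ref{lem_stat_MBI} as reference (positive and excessive since stationary) and $\mfo=0$ as reference point. The core ingredient is the resolvent generating function. By \eqref{GF_MBI} and Tonelli,
\[
\sum_{y\geq 0} G(x,y)\, s^y = \int_0^\infty \Psi_t(s)^x \Phi_t(s)\,dt,
\]
and I would then perform the change of variable $v=\Psi_t(s)$: by \eqref{Psi_ode}, $dt=dv/\psi(v)$; at $t=0$, $v=s$; and since $\Lambda(s_0)=\infty$, $v\to s_0$ as $t\to\infty$. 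Combined with $\Phi_t(s)=\pi^*[s]\varpi^*[\Psi_t(s)]$ from \eqref{Phi} and $\pi^*[s]\varpi^*[s]=1$ from Lemma \ref{lem_recursions}, this yields
\[
\sum_{y\geq 0} G(x,y)\, s^y \,=\, \pi^*[s]\int_s^{s_0} \frac{v^x\varpi^*[v]}{\psi(v)}\,dv.
\]

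Writing $A(x)=\int_0^{s_0} v^x\varpi^*[v]/\psi(v)\,dv$ (finite in the transient case), the $s=0$ value gives $G(x,0)=A(x)$, so by Theorem \ref{thm_fundexc} we read off $\con=\mathtt g(0,0)=A(0)$ and $H(x)=G(x,0)/G(0,0)=A(x)/A(0)$. Splitting $\int_s^{s_0}=\int_0^{s_0}-\int_0^s$ then produces
\[
\sum_{y\geq 0}(\pi(y)A(x)-G(x,y))\, s^y \,=\, \pi^*[s]\int_0^s \frac{v^x\varpi^*[v]}{\psi(v)}\,dv,
\]
and via $\pi^*[s]\varpi^*[v]=\exp\!\left(\int_v^s\phi(u)/\psi(u)\,du\right)$ this is exactly the stated closed-form GF for $\Hnormy$. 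The combinatorial expression for $\Hnormy(x)$ then follows by writing $\varpi^*[v]/\psi(v)=(\varpi\star\scale)^*[v]$, integrating term by term, multiplying by $\pi^*[s]$ and extracting $[s^y]$; this in particular makes the sum empty and so $\Hnormy(x)=0$ for $x\geq y$, giving $G(x,y)=\pi(y)A(x)=\pi(y)G(x,0)$ for $x\geq y$. The switching identity $\widehat G(y,x)=\pi(x)G(x,y)/\pi(y)$ therefore yields $\widehat G(y,x)/\widehat G(0,x)=1$ for all $x\geq y$, so $\widehat H\equiv 1$, and by \eqref{resolv_expr}, $\Hqy(x)=\Hnormy(x)/(\pi(y)\con)$.

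With $H$, $\widehat H\equiv 1$, $\con$ and $\Hqy$ all explicit, the transient-case parts of items \ref{item_MBI_resolv}--\ref{item_MBI_fluc} follow by direct substitution into \eqref{fht_expr}, \eqref{twosidedexit}, \eqref{2sidedkilledresol_expr}, \eqref{exit_interval}, \eqref{fpt_downward} and \eqref{1sidedkilledresol_expr}: the $\con$ factors cancel in each ratio, the killing weights are $Q(z,\partial)=pz+q$, and the one-sided exit identity produces $\pi(z)/\pi(b)$ from $\pi(z)\con\widehat H(z)\,\Hqb(x)$. The recurrent MBI case ($p=q=0$, so all killing weights vanish) reduces trivially to $\mathbb P_x(T_y<\zeta)=1$ and the stated ratios of $\Hnormy$; rigorously this is justified by adding a constant killing rate $q'>0$, applying the transient theory to the resulting chain and letting $q'\downarrow 0$, using dominated convergence for the probabilities together with continuity in $q'$ of $\scale^{(q')}$, $\pi^{(q')}$ and $\varpi^{(q')}$ visible from Lemma \ref{lem_recursions}. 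The MBP case $\beta=q=0$ is handled the same way but is delicate because $A(x)$ diverges when $p>0$: here $\phi\equiv 0$ collapses $\pi,\varpi$ to $\delta_0$, so $\Hnormy(x)$ reduces to $\scale(y-x-1)/y$ as stated, while $\pi(y)A(x)$ becomes an indeterminate $0\cdot\infty$.

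The main obstacle is this $0\cdot\infty$ limit in the MBP case. Denoting by superscript $(q')$ the objects associated with the MBP with additional constant killing rate $q'>0$, one has $\phi^{(q')}\equiv q'$, hence $\pi^{(q')*}[s]=\exp(q'\Lambda(s))$ and therefore $\pi^{(q')}(y)=q'\scale(y-1)/y+O(q'^2)$ for $y\geq 1$; meanwhile the substitution $u=\Lambda(v)$ rewrites $A^{(q')}(x)=\int_0^\infty\Lambda^{-1}(u)^x e^{-q'u}\,du$, so $q'A^{(q')}(x)\to s_0^x$ as $q'\downarrow 0$. Their product therefore tends to $s_0^x\scale(y-1)/y$, and combined with the straightforward limit of the $(q')$-analogue of $\Hnormy(x)$ (which tends to $\scale(y-x-1)/y$) this produces the MBP resolvent. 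The MBP expression for $\mathbb P_x(T_{[b}<\zeta)$ is then assembled by the same asymptotic bookkeeping in the sums $\sum_{z=0}^{b-1}(pz+q')\pi^{(q')}(z)/\pi^{(q')}(b)$ and $\sum_{z=0}^{b-1}(pz+q')$ times the $(q')$-analogue of $\Hnormz(x)$: the $z=0$ contribution to the former survives as $b/\scale(b-1)$ through $q'/\pi^{(q')}(b)$, while the $z\geq 1$ contributions yield $pb\scale(z-1)/\scale(b-1)$, producing the factor $(\scale(b-x-1)/\scale(b-1))(1+p\sum_{z=0}^{b-2}\scale(z))$ in the stated formula.
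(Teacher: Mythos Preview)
Your proposal is correct and follows essentially the same architecture as the paper: compute the resolvent generating function via the substitution $v=\Psi_t(s)$, split $\int_s^{s_0}=\int_0^{s_0}-\int_0^s$, identify the pieces as $\pi(y)A(x)$ and $\Hnormy(x)$, then feed $H$, $\widehat H$, $\con$ and $\Hqy$ into Theorems \ref{thm_fundexc} and \ref{thm_fluc}; for non-transient parameters (recurrent MBI and MBP) add an artificial constant killing rate and let it vanish, with the MBP resolvent recovered from the asymptotics $\pi^{(q')}(y)\sim q'\scale(y-1)/y$ and $q'A^{(q')}(x)\to s_0^x$.

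There are two small methodological differences worth noting. First, the paper establishes $\widehat H\equiv 1$ probabilistically: since $\pi$ is genuinely stationary (not merely excessive), $\sum_y\widehat P_t(x,y)=1$, so the dual has infinite lifetime; being skip-free upward and transient it must therefore reach any upper level almost surely, whence $\widehat H(y)=\widehat{\mathbb P}_y(T_\mfo<\zeta)^{-1}\cdot\widehat{\mathbb P}_\mfo(T_\mfo<\zeta)=1$ via the dual analogue of \eqref{fpt_above_general}. Your route---reading $G(x,y)=\pi(y)A(x)$ for $x\geq y$ directly off the resolvent formula and using the switching identity---is equally valid and arguably more self-contained. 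Second, for the MBP limit $q'A^{(q')}(x)\to s_0^x$ the paper integrates by parts in $v$ (obtaining $1+\int_0^{s_0}xv^{x-1}e^{-q'\Lambda(v)}dv\to s_0^x$ by dominated convergence), whereas your substitution $u=\Lambda(v)$ turns the expression into $\int_0^\infty\Lambda^{-1}(u)^x\,q'e^{-q'u}\,du$, which converges to $s_0^x$ since the exponential measure concentrates at infinity and $\Lambda^{-1}(u)\to s_0$; both arguments are short and the latter is perhaps cleaner.
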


\begin{proof}
We have for $x\geq 0$ and $s\in[0,s_0)$,
\begin{equation}\label{GF_H_MBI}
\begin{split}
\sum_{y\geq 0} 	\Hnormy(x)  s^y =  \sum_{y\geq x+1} 	\Hnormy(x)  s^y
= & \pi^*[s] \sum_{l\geq 0}\frac{(\scale\star\varpi)(l)}{l+x+1}   s^{x+1+l} \\
= & \pi^*[s] \int_0^s  \sum_{l\geq 0} (\scale\star\varpi)(l)  v^{x+l} d v \\
= & \pi^*[s] \int_0^s v^x\scale^*[v]\varpi^*[v] d v,
\end{split}
\end{equation}	
where we used Fubini for the penultimate equality which is applicable because clearly $|\varpi(l)|\leq \pi(l)$ for all $l\geq 0$ by their definitions. So the first statement follows from Lemma \ref{lem_recursions}.	If $\beta=q=0$, then by Lemma \ref{lem_recursions} we easily see that $\pi(0)=\varpi(0)=1$ and $\pi(l)=\varpi(l)=0$ for $l\geq 1$ and consequently $\Hnormy(x)=\frac{\scale(y-x-1)}y$. Next we prove item (1). Assume all states are transient so that $G(x,y)<\infty$ for all $x,y\geq 0$. For $x\geq 0$ and $s\in[0,s_0)$ we have by Lemma \ref{lem_MBI_MGF} and the change of variables $t=\Lambda(v)-\Lambda(s)$,
\begin{equation*}
\begin{split}
\sum_{y\geq 0} G(x,y)s^y = \int_0^\infty P_t(x,y)s^y d t
=  & \int_0^\infty \left( \Lambda^{-1}(t+\Lambda(s)) \right)^x \pi^*[s] \varpi^*\left[ \Lambda^{-1}(t+\Lambda(s))\right]   d t \\
= & \pi^*[s]\int_s^{s_0}  \frac{v^x\varpi^*[v]}{\psi(v)}  \mathrm d v \\
= & \pi^*[s] \int_0^{s_0}  \frac{v^x\varpi^*[v]}{\psi(v)}  \mathrm d v - \pi^*[s] \int_0^s  \frac{v^x\varpi^*[v]}{\psi(v)}  d v.
\end{split}
\end{equation*}
So the expression for $G(x,y)$ follows by \eqref{GF_H_MBI} and Lemma \ref{lem_recursions} in the case where all states are transient.
Now assume $(X,\mathbb P)$  is an MBP, i.e. $\beta=q=0$. Let, for $r>0$, $(X,\mathbb P^{(r)})$ be the MBI with parameters $(\alpha,\mu,p,0,\nu,r)$. Then via \eqref{GF_MBI}, the transition kernel of $(X,\mathbb P^{(r)})$ is given by $P^{(r)}_t(x,y)=  e^{-r t }P_t(x,y)$, $x,y\in E$, $t\geq 0$.
Moreover, by the same reasoning as for \eqref{killed_process_stoptime}, for any stopping time $T$,
\begin{equation}\label{feynman-kac_const}
\mathbb P_x^{(r)}(\Gamma\cap\{T<\infty\})=\mathbb E_x \left[ e^{-r T } \1_{\Gamma\cap\{T<\infty\}}  \right], \quad \Gamma\in\mathcal F_T, \ x\geq 0.
\end{equation}
Let $G^{(r)}(x,y)=\int_0^\infty P_t^{(r)}(x,y) d t$ be the resolvent of the MBI with parameters $(\alpha,\mu,p,0,\nu,r)$ and denote by $\pi_r$ and $\varpi_r$ the measures defined by $\pi_r(0)=\varpi_r(0)=1$ and, for $k\geq 0$, $\pi_r(k+1)=\frac r{k+1}(\pi_r\star\scale)(k)$ and $\varpi_r(k+1)=-\frac r{k+1}(\varpi_r\star\scale)(k)$. Since for the MBI with parameters $(\alpha,\mu,p,0,\nu,r)$ all states are transient as $r>0$, we can use the formula for the resolvent that we just proved to deduce for $x,y\geq 0$,
\begin{equation*}
 G^{(r)}(x,y) =  \pi_r(y) \int_0^{s_0}  \frac{v^x}{\psi(v)} \exp \left(- \int_0^v \frac{r}{\psi(u)} d u \right) d v - \sum_{l=0}^{y-x-1} \pi_r(y-x-1-l)\frac{(\scale\star\varpi_r)(l)}{l+x+1}.
\end{equation*}
It is easy to prove by induction that $\lim_{r\downarrow 0}\pi_r(y)=\lim_{r\downarrow 0} \varpi_r(y)=0$ and $\lim_{r\downarrow 0}\frac{\pi_r(y)}r=\frac{\scale(y-1)}y$ for $y\geq 1$ and further by an integration by parts, for $x\geq 0$,
\begin{multline}\label{IBP_MBP_resol}
\lim_{r\downarrow 0} r  \int_0^{s_0}  \frac{v^x}{\psi(v)} \exp \left(- \int_0^v \frac{r}{\psi(u)} d u \right) d v \\
= \lim_{r\downarrow 0}\left( 1 + \int_0^{s_0}  x v^{x-1} \exp \left(- \int_0^v \frac{r}{\psi(u)} d u \right)  d v \right) =s_0^x.
\end{multline}
So by the monotone convergence theorem we deduce, for all $x\geq 0$ and $y\geq 1$,
\begin{equation*}
\begin{split}
G(x,y)=\lim_{r\downarrow 0} G^{(r)}(x,y) = & \lim_{r\downarrow 0} \frac{\pi_r(y)}r r  \int_0^{s_0} \frac{v^x}{\psi(v)} \exp \left(- \int_0^v \frac{r}{\psi(u)} d u \right) d v - \frac{\scale(y-x-1)}{y} \\
= & \frac{s_0^x\scale(y-1)}y  - \frac{\scale(y-x-1)}{y},
\end{split}
\end{equation*}
which finishes the proof of item \ref{item_MBI_resolv}.

For items \ref{item_MBI_htp} and \ref{item_MBI_fluc} we use the results of Section \ref{sec_pot_fluc}. First assume that all states are transient. Then $\beta>0$ or $q>0$ and thus $\pi$ from Lemma \ref{lem_recursions} is a positive (not merely nonnegative) measure and by Lemma \ref{lem_stat_MBI} it is a stationary and thus excessive measure for $(X,\mathbb P)$. So we can take $\pi$ as our reference measure. We further let the reference point be $\mfo=0$. As $\pi$ is stationary, $\mathbb P_x(t<\zeta)=\sum_{y\geq 0}\widehat P_t(x,y)=\frac{\pi(x)P_t(x,y)}{\pi(y)}=1$ for all $x\geq 0$ and any $t\geq 0$, so the dual Markov chain $(X,\widehat{\mathbb P})$ has infinite lifetime. Because all states are transient for the dual and the dual is skip-free upwards, it follows that $\widehat{\mathbb P}_y(T_b<\zeta)=1$ for all $y\leq b$. Consequently, by the analogue of \eqref{fpt_above_general} for the dual we deduce $\widehat H(y)=1$ for all $y\geq 0$. By the definition of $H$ in Theorem \ref{thm_fundexc}\ref{item_H}, $H(x)=\frac{G(x,0)}{G(0,0)}$.
Then by \eqref{resolv_expr} and the expression for $G(x,y)$ from item \ref{item_MBI_resolv},
\begin{equation*}
\Hqy(x)=H(x)-\frac{G(x,y)}{\con\widehat H(y)\pi(y)} = \frac1{\con} \left( G(x,0) - \frac{G(x,y)}{\pi(y)}\right)   = \frac{\Hnormy(x)}{\con\pi(y)}, \quad x,y\geq 0.
\end{equation*}
The identities in items \ref{item_MBI_htp}-\ref{item_MBI_fluc} then immediately follow from the identities in Theorem \ref{thm_fluc} in the case where all states are transient. When not all states are transient for $(X,\mathbb P)$, we consider for $r>0$, the MBI $(X,\mathbb P^{(r)})$ for which all states in $E$ are transient and so for which the identities in items \ref{item_MBI_htp}-\ref{item_MBI_fluc} apply. Then via \eqref{feynman-kac_const} and the monotone convergence theorem we can take $r\downarrow 0$ on both sides of the identities for $(X,\mathbb P^{(r)})$ to get the identities for $(X,\mathbb P)$ using that, with the obvious notation, $\lim_{r\downarrow 0}\pi_r(x)=\pi(x)$ and $\lim_{r\downarrow 0}\Hnormy_r(x)=\Hnormy(x)$ for $x\geq 0$ and using in the case where $\beta=q=0$, $\lim_{r\downarrow 0}\frac{\pi_r(y)}r=\frac{\scale(y-1)}y$ for $y\geq 1$ and \eqref{IBP_MBP_resol}.
\end{proof}

\begin{remark}\label{remark_MBI_killingparam}
	Regarding the role of the killing parameters $p$ and $q$, if $(X,\mathbb P)$ is an MBI with parameters  $(\alpha,\mu,p,\beta,\nu,q)$ and $(X,\mathbf P)$ an MBI with parameters $(\alpha,\mu,0,\beta,\nu,0)$, then following the arguments leading up to \eqref{killed_process_stoptime} one can show for any stopping time $T$,
	\begin{equation*}
	\mathbb P_x(\Gamma\cap\{T<\infty\})=\mathbf E_x \left[ e^{-q T -p\int_0^T X_s d s} \1_{\Gamma\cap\{T<\infty\}}  \right], \quad \Gamma\in\mathcal F_T, \ x\geq 0,
	\end{equation*}
	where $\mathbf E_x$ is the expectation operator associated with $\mathbf P_x$. So all the identities in Theorem \ref{thm_MBI} can be rephrased in terms of the MBI $(X,\mathbf P)$ for which the killing parameters $p$ and $q$ are $0$. As an example,
	\begin{equation*}
 \mathbf E_x \left[  e^{-q T_a - p\int_0^{T_a} X_s  d s} \1_{\{T_{a}<T_{[b}\wedge\zeta\}} \right] = \mathbb P_x(T_{a}<T_{[b}\wedge\zeta)  =
	\frac{ \Hnormb(x) }{ \Hnormb(a) }, \quad 0\leq a\leq x\leq b-1.
	\end{equation*}
	A similar remark holds of course for the role of the killing parameter $p$ for downward skip-free compound Poisson processes considered in Section \ref{sec_CPP}.
\end{remark}

Most of the identities in Theorem \ref{thm_MBI} are new including in the sense that no analogues have appeared in the literature for the discrete-time analogue of MBIs, i.e.~Galton-Watson processes, or the continuous-state space analogue of MBIs, i.e.~continuous-state branching processes with immigration (CBIs). For the special case of MBPs some of these identities are covered in the literature because the Lamperti transform tells us that an MBP is a time-changed skip-free downward compound Poisson process stopped at hitting 0 and killed at a constant rate, see Theorem III.11.3 in \cite{Ney-Bran} for the Lamperti transform for nondecreasing MBPs. This implies that, in the MBP case, the first two identities in Theorem \ref{thm_MBI}\ref{item_MBI_fluc} and the identity in Theorem \ref{thm_MBI}\ref{item_MBI_htp} for $y\leq x$ are identical to those for skip-free downward compound Poisson processes, which is confirmed by comparing Theorems \ref{thm_CPP} and \ref{thm_MBI}, see also Theorem 12.8 and Corollary 12.9 in \cite{Kyprianou} for the continuous-state space case. With immigration the only identity that, to our knowledge, has appeared before in the literature is the one for the downwards hitting probability, which from Theorem \ref{thm_MBI}\ref{item_MBI_htp} reads, in the case where all states are transient,
\begin{equation}\label{hitprob_downw_MBI}
\mathbb P_x(T_a<\zeta)  %
= \frac{ \int_0^{s_0}  \frac{v^x}{\psi(v)} \exp \left(- \int_0^v \frac{\phi(u)}{\psi(u)} d u \right)  d v }{  \int_0^{s_0} \frac{ v^a}{\psi(v)} \exp \left(- \int_0^v \frac{\phi(u)}{\psi(u)} d u \right)  d v }, \quad 0\leq a\leq x.
\end{equation}
Namely, the analogue of this identity for CBIs has been derived  by Duhalde et al. in \cite{DFM} and the MBI case can be found in \cite{Vidmar}, see also \cite{paper1} for the case $\beta=0$. We remark that given the input parameters $(\alpha,\mu,p,\beta,\nu,q)$ and given $y\geq 0$ the object $\Hnormy(x)$, $0\leq x\leq y-1$, can be computed exactly in a finite number of steps via the recursions in Lemma \ref{lem_recursions}. On the other hand for e.g. the downwards hitting probability \eqref{hitprob_downw_MBI} one needs to evaluate the branching and immigration mechanisms $\psi$ and $\phi$ which might not always have a fully explicit form depending on the input parameters. Since $\mathbb P_x(T_a<\zeta)=\lim_{b\to\infty} \mathbb P_x(T_{a}<T_{[b}\wedge\zeta)$ one can alternatively approximate  $\mathbb P_x(T_a<\zeta)$ by evaluating $\mathbb P_x(T_{a}<T_{[b}\wedge\zeta)$ for a sufficiently large $b$ which avoids needing to compute $\psi$ and $\phi$ or the integrals in \eqref{hitprob_downw_MBI}.
In Theorem \ref{thm_MBI} we see that the expressions simplify for the special case of MBPs. This is also the case  for another subclass of MBIs.

\begin{example}
Assume the parameters $(\alpha,\mu,p,\beta,\nu,q)$ of the MBI are such that $0<m\leq 1+\frac p\alpha$, $\beta=\alpha m$, $\nu(j)=\frac{(j+1)\mu(j+1)}m$ for $j\geq 1$ and $q=p-\alpha(m-1)$, where $m:=\sum_{j\geq 2}j\mu(j)$. Then $\phi(s)=-\psi'(s)$ for all $s\geq 0$ and consequently, for $s\in[0,s_0)$ and $y\geq 0$,  via Lemma \ref{lem_recursions} and Theorem \ref{thm_MBI},
\begin{equation*}
\int_0^{s_0}  \frac{v^x}{\psi(v)} \exp \left(- \int_0^v \frac{\phi(u)}{\psi(u)} d u \right) d v = \frac1{\psi(0)}\frac{s_0^{x+1}}{x+1}, \quad \pi^*[s] = \frac{\psi(0)}{\psi(s)}, \quad  \sum_{y\geq 0} \Hnormy(x)  s^y = \frac1{\psi(s)}\frac{s^{x+1}}{x+1}.
\end{equation*}
This yields, for $x,y\geq 0$,
\begin{equation*}
\pi(x)= \psi(0)W(x), \quad \Hnormy(x)= \frac{W(y-x-1)}{x+1}.
\end{equation*}
\end{example}	

\bigskip

\bibliographystyle{plain}

\end{document}